\newcommand{\comment}[1]{}
\newcommand{\mathnotation}[2]{\newcommand{#1}{\ensuremath{#2}}}
\mathnotation{\deltthree}{\Delta_3}
\mathnotation{\deltfour}{\Delta_4}
\mathnotation{\deltfive}{\Delta_5}
\mathnotation{\deltsix}{\Delta_6}
\mathnotation{\ldef}{\mathrel{\raisebox{.069ex}{:}\!\!=}}% Left define
\mathnotation{\rdef}{\mathrel{=\!\!\raisebox{.069ex}{:}}}% Right define
\mathnotation{\ho}{\varphi}
\mathnotation{\hoii}{\psi}
\mathnotation{\surf}{S}		% Surface
\mathnotation{\fol}{\mathcal{F}}
\mathnotation{\folu}{\fol^{\mathrm{u}}}
\mathnotation{\fols}{\fol^{\mathrm{s}}}
\mathnotation{\id}{Id}
\mathnotation{\tmeas}{\mu}
\mathnotation{\tmeasu}{\tmeas^{\mathrm{u}}}
\mathnotation{\tmeass}{\tmeas^{\mathrm{s}}}
\mathnotation{\Nsing}{N}
\mathnotation{\prongs}{\#\text{prongs}}
\mathnotation{\aaa}{a}				% a = (n+5)/2
\newcommand{\infloop}[1]{}			% infinitesimal edge
\mathnotation{\fp}{p}
\mathnotation{\fq}{p'}
\mathnotation{\fpp}{S}
\mathnotation{\fpq}{Q}
\mathnotation{\xx}{X}				% 
\mathnotation{\po}{m} % Period/iterate
\mathnotation{\Fix}{\mathrm{Fix}}
\newcommand{\Mathematica}{\textsl{Mathematica}}
\newcommand{\R}{\mathbb{R}}
\newcommand{\Z}{\mathbb{Z}}
\newcommand{\Tr}{Tr}
\newcommand\pA{\phi}
\newtheorem{Theorem}{Theorem}[section]
\newtheorem{Corollary}[Theorem]{Corollary}
\newtheorem{Proposition}[Theorem]{Proposition}
\newtheorem{Lemma}[Theorem]{Lemma}
\newtheorem{Remark}[Theorem]{Remark}
\newtheorem{Example}[Theorem]{Example}
\newtheorem{Definition}[Theorem]{Definition}
\newtheorem{Convention}[Theorem]{Convention}
\newtheorem*{NoNumberTheorem}{Theorem}
\newtheorem*{NoNumberLemma}{Lemma}
\begin{document}

\title[Braids with small dilatations]
{On the minimum dilatation of braids on punctured discs}
\author{Erwan Lanneau, Jean-Luc Thiffeault}

\address{
Centre de Physique Th\'eorique (CPT), UMR CNRS 6207 \newline
Universit\'e du Sud Toulon-Var and \newline
F\'ed\'eration de Recherches des Unit\'es de
Math\'ematiques de Marseille \newline
Luminy, Case 907, F-13288 Marseille Cedex 9, France
}
\email{erwan.lanneau@cpt.univ-mrs.fr}

\address{
Department of Mathematics \newline Van Vleck Hall, 480 Lincoln Drive \newline
 University of Wisconsin - Madison, WI 53706, USA}

\email{jeanluc@math.wisc.edu}
\date{\today}

\subjclass[2000]{Primary: 37D40. Secondary: 37E30}
\keywords{pseudo-Anosov homeomorphisms, braids, foliations}

\begin{abstract}
We find the minimum dilatation of pseudo-Anosov braids on
$n$-punctured discs for~$3 \leq n \leq 8$.  This covers the results
of Song-Ko-Los ($n=4$) and Ham-Song ($n=5$). The proof is elementary, and 
uses the Lefschetz formula.
\end{abstract}

\maketitle
%\tableofcontents

%*************************************************************************
%*************************************************************************
%*************************************************************************
%*************************************************************************
\section{Introduction}

There are many results on the minimum dilatation of pseudo-Anosov
homeomorphisms.  They includes bounds and specific
examples~\cite{Penner1991, Brinkmann2004, Leininger2004, Hironaka2006,
  Minakawa2006, Thiffeault2006, Venzke_thesis, Tsai2009, Aaber2010,
  Kin2010b, Kin2010a} as well as known values on closed and punctured
surfaces~\cite{Zhirov1995, Song2002, Song2005, ChoHam2008, Ham2007,
  Hironaka2009_preprint, LanneauThiffeault2010}.  For the punctured
discs, the case with three punctures is classical.  Discs with four
and five punctures were solved by~\cite{Song2002} and~\cite{Ham2007}
using train track automata.  In this paper we give a simple derivation
for four and five punctures, and find the least dilatation for up to
eight punctures, using methods introduced
in~\cite{LanneauThiffeault2010}.

It is well-known that the braid group $B_n$ is isomorphic to the
mapping class group $\textrm{Mod}(0,n+1)$ of the sphere with $n+1$
punctures (one of which is a marked point), that is the disc with $n$
punctures. We denote by $\sigma_i \in B_n$ the classical
generators~\cite{Birman1975}.  We shall prove

\begin{Theorem}
\label{theo:main}
For $3 \leq n \leq 8$, the minimum dilatation $\delta_n$ of
pseudo-Anosov $n$-braids is the Perron root (maximal root) of the following
polynomials:
\begin{table}[htbp]
\begin{tabular}{lllll}
\hline
n & $\delta_n \simeq$ & polynomial & braid & stratum \\
\hline
3 & 2.61803 & $\xx^2-3\xx+1$ & $\sigma_1 \sigma^{-1}_2$ & $(-1;-1^3)$ \\
4 & 2.29663 & $\xx^4-2\xx^3-2\xx+1$ & $\sigma_1 \sigma_2 \sigma^{-1}_3$ & $(-1;-1^4,1)$ \\
5 & 1.72208 & $\xx^4-\xx^3-\xx^2-\xx+1$ & $\sigma_1 \sigma_2 \sigma_3 \sigma_1 \sigma_2 \sigma_3 \sigma_4 \sigma^{-1}_3$ & $(0;-1^5,1)$ \\
6 & 1.72208 & $\xx^4-\xx^3-\xx^2-\xx+1$ &
$\sigma_2\sigma_1\sigma_2\sigma_1\left( \sigma_1 \sigma_2 \sigma_3 \sigma_4 \sigma_5 \right)^2$ & $(0;-1^5,1)$ \\
7 & 1.46557 & $\xx^7-2\xx^4-2\xx^3+1$ & $\sigma_4^{-2} (\sigma_1 \sigma_2 \sigma_3 \sigma_4 \sigma_5 \sigma_6)^2$ & $(2;-1^7,1)$ \\
8 & 1.41345 & $\xx^8-2\xx^5-2\xx^3+1$ & $\sigma_2^{-1}\sigma_1^{-1} (\sigma_1 \sigma_2 \sigma_3 \sigma_4 \sigma_5 \sigma_6 \sigma_7)^5$ & $(3,-1^8,1)$ \\
\hline
\end{tabular}
%\medskip
%\caption{Braids realizing the minimum dilatation for each case.}
%\label{table:mindil}
\end{table}
\end{Theorem}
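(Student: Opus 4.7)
The plan is to adapt the strategy of \cite{LanneauThiffeault2010}, originally developed for closed surfaces, to $n$-punctured discs. A pseudo-Anosov $n$-braid $\pA$ corresponds to a pseudo-Anosov mapping class on the sphere with $n+1$ marked points, and determines a pair of invariant measured foliations $\folu,\fols$ whose singular set consists of the $n+1$ marked points (each a $k$-prong with $k \geq 1$) together with possibly some additional interior singularities (each with $k \geq 3$). The Euler--Poincar\'e formula $\sum_i (2 - k_i) = 4$ on the sphere restricts these to finitely many strata for each $n \leq 8$, which I would enumerate explicitly as a first step, producing for each $n$ the short list of candidate strata that a minimizer could inhabit.

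For each admissible stratum I would then derive a lower bound on $\delta(\pA)$ via the Lefschetz fixed-point formula applied to iterates of $\pA$. Writing the characteristic polynomial of $\pA_\ast$ on $H_\ast$ in terms of the minimal polynomial of $\delta(\pA)$ and the permutation action on the singularities, one obtains
\[
L(\pA^k) \;=\; 2 - \mathrm{Tr}\l(\pA^k_\ast\r),
\]
an expression polynomial in $\delta(\pA)^{\pm 1}$. On the other hand, the geometric count of $\Fix(\pA^k)$, organized by how each prong is permuted, produces an integer-valued sum of local indices. Equating the two sides for successive $k$ gives strong arithmetic constraints on the minimal polynomial of $\delta(\pA)$, from which the smallest Perron root compatible with the stratum can be extracted.

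To confirm that these lower bounds are sharp, I would take each braid word in the table, write down an invariant train track carrying $\folu$, and compute the Perron eigenvalue of the associated incidence matrix; in each case this should match the claimed polynomial, so the explicit braid realizes the bound in its stratum and no smaller dilatation is possible for $n$-braids.

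The main obstacle is the Lefschetz step. The bookkeeping is delicate because the invariant foliations may be orientable or non-orientable (the prong indices enter with signs that depend on this), because $\pA$ can permute singularities in nontrivial cycles (so the homological trace must be tracked prong-by-prong), and because one must rule out infinitely many candidate minimal polynomials in each stratum using only finitely many iterates $k$. I expect most of the work to be concentrated in the $n=7$ and $n=8$ strata, where the number of admissible cases is largest and the candidate polynomials have the highest degree, and where distinguishing which stratum actually minimizes $\delta_n$ among several competitors is itself nontrivial.
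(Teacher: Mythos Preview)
Your high-level plan matches the paper's: enumerate strata, use Lefschetz to bound dilatations from below stratum by stratum, and verify sharpness with explicit braids. But there is a genuine gap in how you propose to connect the Lefschetz number to $\delta(\pA)$.

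On the sphere with marked points, $H_1$ is generated by loops around the punctures and $\pA_\ast$ acts as a permutation matrix; hence $\textrm{Tr}(\pA_\ast^k)$ merely counts fixed punctures and carries no information about the dilatation. Your assertion that $L(\pA^k)$ is ``an expression polynomial in $\delta(\pA)^{\pm 1}$'' is therefore false if the Lefschetz formula is applied on the sphere itself. The paper's remedy, which you do not mention, is to pass to the \emph{orientating double cover} $\pi:S\to\mathbb{P}^1$: there the foliations become orientable, the lift $\pA$ (and also $\tau\circ\pA$, with $\tau$ the hyperelliptic involution) is pseudo-Anosov with the same dilatation, and now $\delta(\pA)$ is the Perron root of $\chi_{\pA_\ast}$ on $H_1(S;\R)$. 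Only on this cover does the Lefschetz formula produce arithmetic constraints on $\delta$; moreover one obtains constraints from \emph{both} lifts (equivalently from both $P(\xx)$ and $P(-\xx)$), and this extra pairing is frequently what eliminates the last surviving candidates.

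Two smaller points. First, your worry about having to ``rule out infinitely many candidate minimal polynomials'' is misplaced: once the genus $g$ of the cover is fixed, there are only finitely many degree-$2g$ monic reciprocal integer polynomials with Perron root below the target bound, so the list to test is finite and can be generated explicitly. Second, on the sphere the invariant foliations are \emph{never} orientable in this setting (every $1$-prong puncture is an odd singularity), so the orientable/non-orientable dichotomy you raise does not arise downstairs; it is precisely the reason the double cover is forced.
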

The notation for strata is explained in Section~\ref{sec:strata}.
Note that for $n=6$ the pseudo-Anosov with smallest dilatation is
identical to that for $n=5$, but with a punctured degree-$1$
singularity, as conjectured by Venzke~\cite{Venzke_thesis}.  For $n=5$
and~$7$ the braids are part of a sequence described
in~\cite{Hironaka2006,Thiffeault2006,Venzke_thesis}.  For $n=8$ the
minimizing braid is the one described by Venzke~\cite{Venzke_thesis}.

In the next section, we introduce the tools we will use to prove
Theorem~\ref{theo:main}, and sketch the proof in
Section~\ref{sec:outline}. The first two cases of the theorem ($n=3$
and $n=4$) are detailed in Sections~\ref{section:n=3}
\&~\ref{section:n=4}.  For the other cases
(Sections~\ref{section:n=5}--\ref{section:n=8}) we use a computer to
find reciprocal polynomials with a Perron root less than a given
constant and for the combinatorics of the Lefschetz numbers; this is
straightforward and elementary.\footnote{These
  \Mathematica~\cite{Mathematica7} functions are included as
  Electronic Supplementary Material in the file
  \texttt{PseudoAnosovLite.m} and the example notebook
  \texttt{disc5.nb} for the disc with~$5$ punctures.}  In an appendix
we give the minimum dilatation for each stratum and provide explicit
examples of a braid realizing each minimum (for $3\le n \le 7$).

Presumably the method could be used on discs with more punctures: the
limiting steps are (i) the generation of the list of polynomials on
the generic stratum (with the most allowable degree-1 singularities);
(ii) the eliminination of polynomials `by hand' using the
combinatorics of orbits or the action on singulaties.  We also know by
examining each stratum (see appendix, Section~\ref{sec:s4dic5}) that
sometimes polynomials cannot be ruled out in this manner; in those
cases other techniques must be used, such as train track
automata~\cite{LanneauThiffeault_ttauto}.

\subsection*{Acknowledgments}

J-LT was supported by the Division of Mathematical Sciences of the US
National Science Foundation, under grant DMS-0806821. EL was supported
by the Centre National de la Recherche Scientifique under project
PICS~4170 (France--USA).

%*************************************************************************
%*************************************************************************
%*************************************************************************
%*************************************************************************
\section{Preliminaries}

For basic references on braid groups, mapping class groups, and
pseudo-Anosov homeomorphisms see for example~\cite{Birman1975,
  Fathi1979, Thurston1988}.

%*************************************************************************
\subsection{Braid groups}

Let $n\geq 3$ be an integer. The braid group $B_n$ is defined by the presentation 
\[
B_n =  \bigl\langle \sigma_1,\dots,\sigma_{n-1}\ \bigl| \
\text{$\sigma_i \sigma_j=\sigma_j \sigma_i$ if $|i-j| \geq 2$ and
$\sigma_i \sigma_j \sigma_i = \sigma_j \sigma_i\sigma_j$ if $|i-j|=1$}
\bigr\rangle.
\]
The group $B_n$ is naturally identified with the group of homotopy
classes of orientation-preserving homeomorphisms of an $n$-punctured
disc, fixing the boundary pointwise \cite{Birman1975}.  One can see
this as follows. Let $\beta \in B_n$ be a geometric $n$-braid, sitting
in the cylinder $[0, 1]\times D$ with $D$ the unit disc, whose $n$
strands start at the puncture points of $\{0\}\times D$ and end at the
puncture points of $\{1\} \times D$. The braid may be considered as
the graph of the motion, as time goes from $1$ to $0$, of $n$ points
moving in the disc, starting and ending at the puncture points. It can
be proved that this motion extends to a continuous family of
homeomorphisms of the disc, starting with the identity and fixed on
the boundary at all times. The end map of this isotopy is a
homeomorphism $h: D \rightarrow D$, which is well-defined up to
isotopy fixed at the punctures and the boundary.  Conversely, given a
homeomorphism $h: D \rightarrow D$ representing some element of the
mapping class group, we want to get a geometric $n$-braid. By a
well-known trick of Alexander, any homeomorphism of a disc which fixes
the boundary is isotopic to the identity, through homeomorphisms
fixing the boundary. The corresponding braid is then just the graph of
the restriction of such an isotopy to the puncture points.  Thus there
is a canonical embedding of the braid group $B_n$ to the mapping class
group of the sphere with $n+1$ punctures
$\textrm{Mod}(0,n+1)$. Geometrically, the standard generators
$\sigma_1,\dots,\sigma_{n-1}$ are induced by right Dehn
half-twists around loops enclosing the punctures $p_{i}$ and $p_{i+1}$
(up to homotopy). \medskip

Obviously, orientation-preserving disc
homeomorphisms and orientation-preserving sphere homeomorphisms with a
fixed marked point define the same object.

\begin{Convention}
\label{convention:sphere}
All sphere homeomorphisms in this paper will fix a marked point
(regular or singular) on the sphere.
\end{Convention}

%*************************************************************************
\subsection{Pseudo-Anosov homeomorphisms}

The classification theorem of Thurston~\cite{Thurston1988} asserts
that any orientation-preserving homeomorphism of a compact surface $S$
is relative-isotopic to a finite-order, reducible, or pseudo-Anosov
homeomorphism.  In this paper we are interested in the last case.  A
homeomorphism $\pA$ is pseudo-Anosov if there exists a pair of
$\pA$-invariant transitive measurable foliations $(\mathcal
F^{s},\mathcal F^{u})$ on a surface $S$ of genus $g$ that are
transverse to each other and have common singularities~$\Sigma_i$.
Furthermore, there must exist a constant $\lambda=\lambda(\pA)>1$ such
that $\pA$ expands leaves of one foliation and shrinks those of the
other foliation with coefficient $\lambda$ (in the sense of
measures). The number $\lambda$ is a topological invariant called the
\emph{dilatation} of $\pA$; the number $\log(\lambda)$ is the
\emph{topological entropy} of~$\pA$.

Thurston proved that if the foliations are orientable then 
\begin{enumerate}
\item The linear map $\pA_\ast$ defined on $H_1(S,\R)$ has a simple
  eigenvalue $\rho(\pA_\ast) \in \R$ such that $|\rho(\pA_\ast)| > |x|
  $ for all other eigenvalues $x$;
\item $|\rho(\pA_\ast)| > 1$ is the dilatation $\lambda$ of $\pA$. 
\end{enumerate}
In general $\lambda(\pA) \geq \rho(\pA_\ast)$, with equality if and
only if the foliations are orientable.

\begin{Remark}
  The number $|\rho(\pA_\ast)|$ is a \emph{Perron number} (i.e. an
  algebraic integer $\lambda > 1$ whose conjugates $\lambda'$ all
  satisfy $|\lambda'| < \lambda$).  If $\rho(\pA_\ast) > 0$ then
  $\rho(\pA_\ast)$ is a Perron root of the polynomial
  $\chi_{\pA_\ast}(\xx)$, where~$\chi_{\pA_\ast}$ is the
  characteristic polynomial of~$\pA_\ast$.  If $\rho(\pA_\ast) < 0$
  then $-\rho(\pA_\ast)$ is a Perron root of the polynomial
  $\chi_{\pA_\ast}(-\xx)$.  We also denote by~$\rho(P)$ the largest
  root (in magnitude) of the polynomial~$P$.
\end{Remark}

\begin{Definition}
  The isotopy class of a braid $\beta\in B_n$ is by definition the
  isotopy class of the corresponding homeomorphism $h$ of the
  $n$-punctured disc (relative to the set $\bigcup_i \Sigma_i
  \cup \partial D^2$, where $\Sigma_i$ are the singularities).  We say
  that the braid $\beta$ is pseudo-Anosov if the isotopy class of $h$
  is pseudo-Anosov; in this case we define the dilatation of $\beta$ as the
  dilatation of the pseudo-Anosov homeomorphism.
\end{Definition}

%*************************************************************************
\subsection{Singularities}

Let $\Sigma$ be a singularity of the stable (or unstable) foliation
determined by $\pA$ such that there are $m$ leaves passing through
$\Sigma$ ($m\geq 1$, $m\not = 2$). We will say that $\Sigma$ is a
singularity of $\pA$ of degree $k=m-2$. We call \emph{separatrices}
leaves passing through singularities.  For an orientable foliation, if
$\Sigma \in S$ is a singularity of $\pA$ (necessarily of even degree,
say $2d$) then there are $2(d+1)$ emanating separatrices: $d+1$
outgoing separatrices and $d+1$ ingoing separatrices.  The collection
$s=(k_1,\dots,k_m)$ of the degrees of the singularities is called the
\emph{singularity data} of the foliations.  The Gauss-Bonnet formula
gives $\sum_{i=1}^m k_i = 4g-4$.

%*************************************************************************
\subsection{Orientating double cover and strata}
\label{sec:strata}

The following construction is classical.  Let $h$ be a pseudo-Anosov
homeomorphism of the sphere (that fixes a marked point) and let
$(\mathcal F^s,\mathcal F^u)$ be the pair of {\it nonorientable}
invariant foliations on the sphere $\mathbb P^1$ determined by $h$.
There exists a canonical (ramified) double covering $\pi : S
\rightarrow \mathbb P^1$ such that the foliations lift to two
transverse measured foliations $(\widehat{\mathcal
  F^{s}},\widehat{\mathcal F^{u}})$ on $S$ that are orientable.
\medskip

\noindent The homeomorphism $h$ also lifts to a pseudo-Anosov
homeomorphism $\pA$ on $S$, with the same dilatation. Observe that if
we denote by $\tau$ the hyperelliptic involution determined by the
covering $\pi$, then there are two lifts: $\pA$ and $\tau \circ \pA$
($= \pA \circ \tau$).  We choose the lift $\pA$ such that
$\rho(\pA_\ast)>0$, so that the other lift satisfies $\rho((\tau \circ
\pA)_\ast)<0$.  The covering $\pi: S \rightarrow \mathbb P^1$ is the
minimal (ramified) covering such that the pullback of the foliations
$(\mathcal F^s,\mathcal F^u)$ becomes orientable.

The set of critical values of $\pi$ (i.e. the image of $\Fix(\tau)$ by
$\pi$) on the sphere coincides exactly with the set of singularities
of odd degree of the foliations.

\begin{Convention}
\label{convention:surface}
All surface homeomorphisms $\pA$ are defined on a surface $S$ equipped
with an involution $\tau$.  We will always assume that the
homeomorphisms $\pA$ and $\tau$ commutes (which is fulfilled if the
maps are affine with respect to the Euclidian metric determined by the
foliations~\cite{LanneauThiffeault2010}).
\end{Convention}

We can define strata for a pseudo-Anosov homeomorphism $\pA$ as
follows.  If $\pA$ fixes {\it globally} a set of $r$ singularities
(necessarily of the same degree $k$), we will use the superscript
notation $(k^r)$ for $k,\dots,k$ repeated $r$ times.  On the other
hand, if $\pA$ fixes {\it pointwise} the singularities, we will use
the notation $(k,\dots,k)$.  For instance, the singularity data
$(2^2,2,2)$ for $\pA$ on a genus-$3$ surface means that $\pA$ fixes a
set consisting of two degree-2 singularities and fixes the other two
degree-2 singularities pointwise. \medskip

For surfaces, we will allow {\it fake singularities}, i.e.\ regular
points, and we will use the formulation ``singularities of
degree~$0$''.  One has the following straightforward lemma:

\begin{NoNumberLemma}
  Let $h$ be a pseudo-Anosov homeomorphism on the sphere and let
  $\Sigma$ be a degree-$k$ singularity of $h$. Let $\pA$ be a lift of
  $h$ on the orientating double cover.  If $k$ is odd then $\Sigma$
  lifts to a single singularity of $\pA$ of degree $2k+2$; otherwise
  $\Sigma$ lifts to two singularities of $\pA$ of degree $k$.
\end{NoNumberLemma}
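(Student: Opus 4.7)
The paper has already established (two paragraphs above the lemma) that the critical values of $\pi$ on the sphere are exactly the odd-degree singularities of the downstairs foliations, so the branch locus is free. The remaining work is to read off the local structure of the lifted foliation at the preimage(s) of $\Sigma$ in each of the two cases.

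In the even case, $k$ is not odd, so $\Sigma$ is a regular value of $\pi$. Hence $\pi^{-1}(\Sigma) = \{\Sigma_1,\Sigma_2\}$ consists of two points, and there are neighborhoods of each $\Sigma_i$ on which $\pi$ restricts to a homeomorphism onto a neighborhood of $\Sigma$. The lifted foliations $(\widehat{\mathcal F^s},\widehat{\mathcal F^u})$ are therefore locally isomorphic to $(\mathcal F^s,\mathcal F^u)$ near each $\Sigma_i$, so each $\Sigma_i$ has the same number of prongs, hence the same degree $k$, as $\Sigma$.

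In the odd case, $\Sigma$ is a critical value, so $\pi^{-1}(\Sigma) = \{\tilde\Sigma\}$ is a single point and $\pi$ is locally modelled by $w\mapsto z = w^2$. I would count separatrices at $\tilde\Sigma$ directly: each of the $k+2$ separatrices of $\Sigma$, given in local coordinates as a ray at argument $\theta$, has two preimages under $z=w^2$, namely rays at arguments $\theta/2$ and $\theta/2+\pi$. Hence $\tilde\Sigma$ has $2(k+2)$ prongs, and its degree is $2(k+2)-2 = 2k+2$.

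As a sanity check (not needed in the final writeup but useful to include if I wanted a more algebraic alternative), the quadratic differential giving the foliations has local form $q = z^k\, dz^2$ near $\Sigma$; pulling back by $z = w^2$ gives $\pi^* q = 4 w^{2k+2}\, dw^2 = (2w^{k+1}\,dw)^2$, an abelian differential with a zero of order $k+1$ at $\tilde\Sigma$, corresponding to a foliation singularity of degree $2(k+1) = 2k+2$. There is no real obstacle here: the lemma is essentially bookkeeping, and the only thing to be careful about is the conventional offset of $2$ between ``number of prongs'' and ``degree''.
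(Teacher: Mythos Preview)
Your argument is correct. The paper itself offers no proof of this lemma---it simply introduces it with ``One has the following straightforward lemma''---so there is nothing to compare against beyond confirming that your reasoning matches the setup already in place (in particular, the identification of the branch locus with the odd-degree singularities). Both your separatrix-counting argument in the odd case and the quadratic-differential pullback check are standard and accurate; the latter is perhaps the cleanest way to see simultaneously that the lifted foliation is orientable and that the degree upstairs is $2k+2$.
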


Convention~\ref{convention:sphere} prompts us to consider the following
two definitions.

\begin{Definition}
\label {def:stratum:sphere}
A stratum on the sphere is an unordered set of integers
$(k^{n_2}_2,\dots,k^{n_m}_m)$ with $\sum_{i=2}^m n_i k_i = -k_1-4$,
where $k_i \geq -1$ for any $i$. We will denote such a stratum by
$s=(k_1;k^{n_2}_2,\dots,k^{n_m}_m)$, the first element being special
--- it is the degree of the marked point of
Convention~\ref{convention:sphere}.
\end{Definition}

\begin{Definition}
\label {def:stratum:surface}
Let $S$ be a hyperelliptic surface (of genus $g\geq 1$) and let $\tau$
be the hyperelliptic involution. The singularity data of a
pseudo-Anosov $\pA$ determines a stratum on the surface
$$
( k^{n_1}_1, \dots, k^{n_l}_l , \underline{k_{l+1}^{n_{l+1}}},\dots ,\underline{k_m^{n_m}}), 
$$
where we underline the degree of the singularities that are permuted
by the involution $\tau$.
\end{Definition}

\begin{Example}
  Let $(-1;-1^{17},-1,4^2,2,1^2,3)$ be the singularity data of a
  pseudo-Anosov homeomorphism on the sphere. Then the corresponding
  singularity data for the lifts on the covering surface is
  $(0,0^{17},0,\underline{4}^4,\underline{2}^2,4^2,8)$; the genus of
  the covering surface is $10$.
\end{Example}

%*************************************************************************
\subsection{Pseudo-Anosov homeomorphisms and the Lefschetz fixed point theorem}

We recall briefly the Lefschetz fixed point formula for
homeomorphisms on compact
surfaces~\cite{Brown_Lefschetz,LanneauThiffeault2010}.  In the present
section, let $\pA$ be a pseudo-Anosov homeomorphism of a compact
surface $S$ with \emph{orientable} invariant measured foliations.  If
$p$ is a fixed point of $\pA$, we define the index of $\pA$ at $p$ to
be the algebraic number $\textrm{Ind}(\pA,p)$ of turns of the vector
$(x,\pA(x))$ when $x$ describes a small loop around $p$.

\begin{Remark}
\label{rk:index}
The index at a fixed point is easy to calculate for a pseudo-Anosov
homeomorphism.  Let $\rho(\pA_\ast)$ be the leading eigenvalue of
$\pA_\ast$.  If $\rho(\pA_\ast) < 0$ then $\textrm{Ind}(\pA,p) = 1$
for any fixed point $p$ (regular or singular).  If $\rho(\pA_\ast) >
0$ then let $\Sigma$ be a fixed degree-$2d$ singularity of $\pA$
($d=0$ for a regular point).  It follows that either
\begin{itemize}
\item $\pA$ fixes each separatrix of $\Sigma$, hence
  $\textrm{Ind}(\pA,\Sigma) = 1-2(d+1) <0$, or
\item $\pA$ permutes cyclically the outgoing separatrices of $\Sigma$,
  hence $\textrm{Ind}(\pA,\Sigma) = 1$.
\end{itemize}
\end{Remark}

We will use the following corollary (see~\cite{LanneauThiffeault2010}
for a proof).
\begin{Corollary}
\label{cor:trick}
Let $\Sigma$ be a fixed degree-$2d$ singularity of $\pA$
with~$\rho(\pA_\ast) > 0$. If $\textrm{Ind}(\pA,\Sigma)=1$ then
$$
\forall \ 1 \leq i \leq d ,\ \textrm{Ind}(\pA^i,\Sigma)=1
$$
and 
$$
\textrm{Ind}(\pA^{d+1},\Sigma)=1-2(d+1).
$$
\end{Corollary}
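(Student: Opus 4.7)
The plan is to iterate Remark~\ref{rk:index} along the sequence $\pA,\pA^2,\ldots,\pA^{d+1}$, exploiting that each $\pA^i$ is itself pseudo-Anosov with the same invariant foliations and with leading eigenvalue $\rho(\pA_\ast)^i>0$, so that the dichotomy of Remark~\ref{rk:index} applies verbatim at the fixed singularity $\Sigma$.

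First I would use the hypothesis $\textrm{Ind}(\pA,\Sigma)=1$ to pin down the combinatorial action of $\pA$ on the $d+1$ outgoing separatrices at $\Sigma$. The first clause of Remark~\ref{rk:index} (where all separatrices are fixed) is excluded, since it would force the index to be $1-2(d+1)<0$. So $\pA$ must fall under the second clause and cyclically permute the outgoing separatrices; since $\pA$ preserves the cyclic order of prongs around $\Sigma$, this action is a rotation, and the dichotomy of Remark~\ref{rk:index} identifies it with a primitive $(d+1)$-cycle.

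Next, for $1\leq i\leq d$, the iterate $\pA^i$ fixes $\Sigma$, has the same invariant foliations as $\pA$, and has leading eigenvalue $\rho((\pA^i)_\ast)=\rho(\pA_\ast)^i>0$, so Remark~\ref{rk:index} applies to it as well. The rotation induced by $\pA^i$ on the $d+1$ outgoing separatrices is the $i$-th iterate of the primitive $(d+1)$-cycle; since $1\leq i<d+1$, this iterate is nontrivial and fixes no separatrix. Hence $\pA^i$ falls under the second clause of Remark~\ref{rk:index} and $\textrm{Ind}(\pA^i,\Sigma)=1$.

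Finally, $\pA^{d+1}$ is the $(d+1)$-th power of the primitive cycle, so it acts trivially on the separatrices and fixes every one of them. The first clause of Remark~\ref{rk:index} then yields $\textrm{Ind}(\pA^{d+1},\Sigma)=1-2(d+1)$. The main delicate point I would be careful about is showing that the cyclic rotation carried out by $\pA$ is truly primitive (i.e.\ is by a generator of $\Z/(d+1)\Z$), so that no intermediate power $\pA^i$ with $i<d+1$ fixes every separatrix and thereby acquires the ``wrong'' index $1-2(d+1)$; this primitivity is implicit in the dichotomous wording of Remark~\ref{rk:index}, but a rigorous check would appeal to the local pseudo-Anosov model at $\Sigma$ to rule out a rotation by a non-generator.
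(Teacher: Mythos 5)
The paper itself does not supply a proof of Corollary~\ref{cor:trick} --- it cites~\cite{LanneauThiffeault2010} --- so the comparison is against the implicit intended argument. Your proposal is the natural one: iterate Remark~\ref{rk:index}, using that each power $\pA^i$ is again pseudo-Anosov with the same foliations, fixes $\Sigma$, and has $\rho((\pA^i)_\ast)=\rho(\pA_\ast)^i>0$, then track the rotation on the $d+1$ outgoing separatrices. The logical chain is correct, and you have isolated exactly the load-bearing step.

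The ``delicate point'' you flag is genuine and deserves to be more than a footnote. The map $\pA$ acts on the $d+1$ outgoing separatrices by a rigid rotation (it preserves the cyclic order of prongs), say by $k$ positions, and the dichotomy of Remark~\ref{rk:index} as stated is really $k=0$ versus $k\neq 0$: the index is $1-2(d+1)$ if and only if every separatrix is fixed, and $+1$ otherwise, regardless of whether the rotation is a single $(d+1)$-cycle or a product of shorter cycles. If $\gcd(k,d+1)=g>1$, then $\pA^{(d+1)/g}$ already fixes every separatrix with $(d+1)/g\le d$, so the first displayed conclusion of the Corollary would fail. Thus the Corollary as stated is really asserting $\gcd(k,d+1)=1$, and simply re-invoking Remark~\ref{rk:index} at each power does not give this for free; one needs either a separate argument (e.g.\ appealing to the local model or to the orientation double-cover structure used in this paper) or an explicit hypothesis. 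Note however that in every application in this paper the fixed singularities have $d+1\in\{2,3\}$ (degree~$2$ or~$4$), which are prime, so every nontrivial rotation is automatically primitive and the conclusion holds. Your proof is therefore sound for the paper's purposes, and your instinct that the primitivity assertion needs an independent check --- rather than being an automatic consequence of $\textrm{Ind}(\pA,\Sigma)=1$ --- is exactly right.
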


\begin{NoNumberTheorem}[Lefschetz fixed point theorem]

  Let $\pA$ be a homeomorphism on a compact surface $S$.  Denote by
  $\Tr(\pA_\ast)$ the trace of the linear map $\pA_\ast$ defined on
  the first homology group $H_1(S,\R)$.  Then the Lefschetz number
  $L(\pA) = 2-\Tr(\pA_\ast)$ satisfies
\[
L(\pA) = \sum_{p=\pA(p)} \textrm{Ind}(\pA,p).
\]
\end{NoNumberTheorem}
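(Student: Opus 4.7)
The plan is to follow the classical route to the Lefschetz fixed point theorem as in the reference to Brown cited in the text. In greater generality one defines, for a continuous self-map $\pA$ of a finite CW complex $S$, the Lefschetz number $L(\pA)=\sum_{i\geq 0}(-1)^i\Tr(\pA_\ast|_{H_i(S,\R)})$. Specializing to a closed orientable surface of genus $g$, the only nontrivial rational homology sits in degrees $0,1,2$, and since an orientation-preserving homeomorphism acts as the identity on $H_0(S,\R)\cong\R$ and $H_2(S,\R)\cong\R$, the alternating sum collapses to $L(\pA)=2-\Tr(\pA_\ast|_{H_1(S,\R)})$, which is the formula stated in the theorem.

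Next I would establish the fixed-point identity in the simplicial setting. The Hopf trace formula asserts that for a simplicial self-map $g$ of a triangulation $K$, the alternating trace of $g$ on the chain groups $C_\ast(K,\R)$ equals the alternating trace on $H_\ast(K,\R)$; and on the chain side only fixed simplices contribute to the trace. If the triangulation is chosen fine enough that each fixed point $p$ of $\pA$ lies in the interior of a top-dimensional simplex on which $\pA$ is locally affine, a direct calculation shows that the local contribution to the chain-level trace at $p$ coincides with the local turning number $\textrm{Ind}(\pA,p)$ defined as the degree of $x\mapsto \pA(x)-x$ on a small loop around $p$.

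Finally, I would reduce the general case to this simplicial one. The left-hand side $L(\pA)=2-\Tr(\pA_\ast)$ is homotopy invariant by functoriality of $H_\ast$. The right-hand side is also invariant under any homotopy of $\pA$ that, for each fixed point $p$, stays fixed-point-free on the boundary of a small disc around $p$, because $\textrm{Ind}(\pA,p)$ depends only on the restriction of $\pA$ to that boundary circle. Approximating $\pA$ by a simplicial map in a compact neighborhood of $\Fix(\pA)$ via a standard transversality/perturbation argument then transfers the identity from the simplicial case to~$\pA$. The most delicate point — and the one I would expect to lean on the reference for — is verifying that the sign conventions used here (the turning number of the vector $(x,\pA(x))$ around a small loop) really do match the local Lefschetz index (the degree of $\pA(x)-x$), so that the chain-level computation and the geometric index of Remark~\ref{rk:index} agree on the nose.
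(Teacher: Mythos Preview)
The paper does not prove this statement; it is quoted as background with a citation to Brown's monograph and to the authors' earlier paper. So there is no ``paper's own proof'' to compare against, and your sketch is exactly the classical argument one finds in the cited reference: reduce the general alternating trace to $2-\Tr(\pA_\ast)$ on a closed oriented surface, invoke the Hopf trace formula to pass from homology to chains, and identify the chain-level contribution at each fixed point with the local index via homotopy invariance and simplicial approximation.

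Two minor remarks on the write-up itself. First, the formula as stated presupposes that $\Fix(\pA)$ is finite (so that the right-hand side is a finite sum of well-defined local indices); this is automatic for the pseudo-Anosov maps the paper actually uses, but your sketch should flag it. Second, the sentence ``choose the triangulation fine enough that each fixed point lies in the interior of a top simplex on which $\pA$ is locally affine'' is not quite how the argument runs: one cannot in general make a homeomorphism simplicial without moving its fixed points. The honest route is the one you outline in your last paragraph---perturb $\pA$ to a map with only nondegenerate fixed points, verify the identity there, and transport it back by homotopy invariance of both sides---so the earlier phrasing is just an infelicity rather than a gap.
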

\noindent For convenience, we isolate the key idea involved in the
Lefschetz formula since we will use it often.  If $\pA$ is a
pseudo-Anosov homeomorphism on a compact surface $S$, with
$\rho(\pA_{\ast})>0$, then
\begin{align*}
2-\Tr(\pA_\ast) &= \sum_\textrm{$p$ singular fixed points of $\pA$} \textrm{Ind}(\pA,p) - \#\{\textrm{regular fixed points of } \pA \}, \textrm{\quad and}\\
2+\Tr(\pA_\ast) &= \#\{\textrm{regular fixed points of } \tau\circ\pA \}.
\end{align*}
In particular
$$
2-\Tr(\pA_\ast) \leq \#\{\textrm{singular fixed points of } \pA \} - \#\{\textrm{regular fixed points of } \pA \}.
$$

\noindent A very useful proposition for calculating Lefschetz numbers
which we will use repeatedly without reference is the following.
\begin{Proposition}
\label{prop:calcul:trace}
Let $P$ be a degree-$2g$ monic reciprocal polynomial
$$
P=\xx^{2g} + \alpha \xx^{2g-1} + \beta \xx^{2g-2} + \gamma \xx^{2g-3} + \dots + \gamma \xx^3 + \beta\xx^2+\alpha\xx + 1.
$$
Let $\pA$ be a homeomorphism such that its characteristic polynomial
satisfies $\chi_{\pA_\ast} = P$. Then
$$
\begin{array}{l}
\textrm{Tr}(\pA_\ast) = -\alpha, \\
\textrm{Tr}(\pA^2_\ast) = \alpha^2 - 2\beta, \\
\textrm{Tr}(\pA^3_\ast) = -\alpha^3 + 3\alpha \beta - 3 \gamma.
\end{array}
$$
\end{Proposition}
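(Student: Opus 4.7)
The plan is to recognize that the three identities are pure linear algebra (equivalently, Newton's identities): they express the power sums of the eigenvalues of $\pA_\ast$ in terms of its elementary symmetric functions, i.e.\ the coefficients of the characteristic polynomial. Nothing about the surface, the foliations, or even the reciprocal property of $P$ is actually needed for the statement; only the top four coefficients of $P$ enter into the formulas for $\Tr(\pA^k_\ast)$ with $k\le 3$.

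Concretely, I would first let $\lambda_1,\dots,\lambda_{2g}$ be the eigenvalues of $\pA_\ast$, so that by definition
\[
\chi_{\pA_\ast}(\xx) \;=\; \prod_{i=1}^{2g}(\xx-\lambda_i) \;=\; \xx^{2g} - e_1\, \xx^{2g-1} + e_2\, \xx^{2g-2} - e_3\, \xx^{2g-3} + \cdots,
\]
where $e_j$ denotes the $j$-th elementary symmetric polynomial in the $\lambda_i$. Comparing with the given form of $P=\chi_{\pA_\ast}$ reads off
\[
e_1 = -\alpha, \qquad e_2 = \beta, \qquad e_3 = -\gamma.
\]
On the other hand, since $\pA_\ast^k$ has eigenvalues $\lambda_i^k$, one has $\Tr(\pA_\ast^k) = p_k \ldef \sum_{i=1}^{2g} \lambda_i^k$.

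The next step is to plug these identifications into Newton's identities for the power sums:
\[
p_1 = e_1, \qquad p_2 = e_1 p_1 - 2 e_2, \qquad p_3 = e_1 p_2 - e_2 p_1 + 3 e_3.
\]
Substituting $e_1=-\alpha$, $e_2=\beta$, $e_3=-\gamma$ gives $p_1=-\alpha$, then $p_2 = (-\alpha)(-\alpha) - 2\beta = \alpha^2 - 2\beta$, and finally $p_3 = (-\alpha)(\alpha^2 - 2\beta) - \beta(-\alpha) + 3(-\gamma) = -\alpha^3 + 3\alpha\beta - 3\gamma$, which are exactly the three claimed formulas.

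There is no genuine obstacle here: the proposition is a bookkeeping statement and the whole proof reduces to citing Newton's identities (or, if preferred, proving them from scratch by expanding $\log P(\xx)$ as a power series in $1/\xx$ and matching coefficients). I would simply remark that the reciprocal hypothesis on $P$ is not used for $\Tr(\pA_\ast^k)$ with $k\le 3$; it reflects Poincar\'e duality on $H_1(S,\R)$ and would only come into play when computing traces of higher powers, where the equalities among ``symmetric'' coefficients of $P$ allow further simplifications.
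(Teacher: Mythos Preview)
Your proof is correct and is exactly the approach the paper takes: the paper's own proof consists of the single line ``Use Newton's formula,'' and you have simply spelled out the relevant Newton identities and the substitution $e_1=-\alpha$, $e_2=\beta$, $e_3=-\gamma$. Your additional remark that the reciprocal hypothesis is irrelevant for $k\le 3$ is accurate and harmless.
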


\begin{proof}
Use Newton's formula, as in~\cite{LanneauThiffeault2010}.
\end{proof}

%*************************************************************************
\subsection{Compatibility with the Lefschetz formula}

Let $P\in \Z[\xx]$ be a degree-$2g$ monic reciprocal polynomial. Let $s$ be a stratum of
the genus-$g$ surface $S$. Let us assume that there exists a pseudo-Anosov
homeomorphism $\pA$ on $S$ with $\chi_{\pA_\ast} = P$ and singularity data
$s$.  The traces of $\pA^\po_\ast$ (and so the Lefschetz numbers of iterates
of $\pA$) are easy to compute in terms of $P$. This gives algebraic
constraints on the number of periodic orbits of $\pA$ as well as the action of
$\pA$ on the separatrices.

\begin{Definition}
  Let $P$ be a degree-$2g$ monic reciprocal polynomial and $s$ be a
  stratum of the surface $S$. We will say that $P$ is compatible with
  $s$, or that $P$ is admissible, if there are no algebraic
  obstructions with the Lefschetz formula.
\end{Definition}

The following is clear:

\begin{Proposition}
\label{prop:admissible}
Let $h$ be a pseudo-Anosov homeomorphism on the sphere and let $\pA$
and $\tau \circ \pA$ be the two lifts on the covering surface $S$
(with singularity data $s$). If $P=\chi_{\pA_\ast}$ then the
polynomials $P(\xx)$ and $P(-\xx)$ are both compatible with $s$.
\end{Proposition}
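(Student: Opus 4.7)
The plan is to reduce compatibility to existence. Indeed, ``compatibility with $s$'' means the absence of an algebraic obstruction coming from the Lefschetz trace formula applied to iterates of a hypothetical pseudo-Anosov with characteristic polynomial $P$ and singularity data $s$. So whenever such a pseudo-Anosov is actually exhibited, every identity coming from Lefschetz holds tautologically, and $P$ is automatically compatible with $s$. Thus it is enough to exhibit, for each of $P(\xx)$ and $P(-\xx)$, an actual pseudo-Anosov on $S$ with singularity data $s$ realizing that polynomial as its characteristic polynomial on $H_1(S,\R)$.

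For $P(\xx)$, the map $\pA$ itself works by hypothesis: $P = \chi_{\pA_\ast}$ and $\pA$ has singularity data $s$. For $P(-\xx)$, the natural candidate is the other lift $\tau\circ\pA$, and the heart of the proof is the identity $\chi_{(\tau\circ\pA)_\ast}(\xx) = P(-\xx)$. Here I would invoke the classical fact that on a hyperelliptic surface the hyperelliptic involution $\tau$ acts as $-\id$ on $H_1(S,\R)$. Combined with Convention~\ref{convention:surface}, which makes $\tau$ and $\pA$ commute, this gives $(\tau\circ\pA)_\ast = \tau_\ast \circ \pA_\ast = -\pA_\ast$, and a short determinant computation
\[
\chi_{-\pA_\ast}(\xx) = \det\bigl(\xx\, I + \pA_\ast\bigr) = (-1)^{2g}\det\bigl(-\xx\, I - \pA_\ast\bigr) = \chi_{\pA_\ast}(-\xx) = P(-\xx)
\]
yields the desired equality.

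It remains to check that $\tau\circ\pA$ is a pseudo-Anosov homeomorphism on $S$ with the \emph{same} singularity data $s$ as $\pA$. Both lifts descend to the single sphere homeomorphism $h$, both preserve the pair $(\widehat{\fol^s},\widehat{\fol^u})$ (up to a global sign), and $\tau$ permutes the preimages of each sphere singularity without changing their degrees. Hence the stratum on $S$ determined by $\tau\circ\pA$ coincides with the one determined by $\pA$. Once this is in place, applying the Lefschetz formula to $\tau\circ\pA$ and its iterates gives only true identities, so $P(-\xx)$ is compatible with $s$.

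The main obstacle is really a well-known input rather than a new computation, namely the action $\tau_\ast = -\id$ on $H_1(S,\R)$; this is standard for hyperelliptic double covers of the sphere branched at the odd-degree singularities, as described in Section~\ref{sec:strata}. The rest is bookkeeping about the two lifts and a one-line determinant calculation.
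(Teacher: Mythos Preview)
Your proposal is correct and is exactly the argument the paper has in mind: the proposition is stated in the paper with the preface ``The following is clear'' and no proof is given. Your write-up supplies precisely the missing justification---namely that $\tau_\ast=-\id$ on $H_1(S,\R)$ forces $\chi_{(\tau\circ\pA)_\ast}(\xx)=P(-\xx)$, so each of the two lifts is an honest pseudo-Anosov realizing one of the two polynomials on the stratum $s$, whence both are compatible by definition. This is consistent with the paper's earlier remark in Section~\ref{sec:strata} that one chooses the lift $\pA$ with $\rho(\pA_\ast)>0$ and then $\rho((\tau\circ\pA)_\ast)<0$.
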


We give two examples to illustrate the above proposition.

%**************
\subsubsection{First example}
The polynomial $P=\xx^2 - 3 \xx + 1$ is compatible with the stratum
$s=(0^4,\underline{0}^2)$.  Indeed, let us assume there exists an
Anosov homeomorphism $\pA$ on the torus with singularity data $s$.
The Lefschetz numbers of the first five iterates of $\pA$ are $(-1,
-5, -16, -45, -121)$.  We can check that the number of periodic orbits
of length $\po=(1,2,3,4,5)$ are $(0, 2, 5, 10, 24)$.  The same
calculation shows that $P(-\xx)$ is also compatible with $s$. Of
course an Anosov homeomorphism realizing $P$ does actually exists,
e.g. $\left[ \begin{smallmatrix}
2 & 1 \\ 1 & 1 \end{smallmatrix}  \right] \in \textrm{PSL}_{2}(\Z) \simeq \textrm{Mod}(1,0)$.

%**************
\subsubsection{Second example}
\label{ex:P2}
The polynomial $P= \xx^6 - 3 \xx^5 + 4 \xx^4 - 5 \xx^3 + 4 \xx^2 - 3
\xx + 1$ is compatible with $s=(0^5,0,4^2)$, but $P(-\xx)$ is not. Let
us show the latter assertion. Assume there exists $\pA$ with
$\chi_{\pA_\ast}(\xx)=P(-\xx)$. Since $\rho(P) > 0$, one has
$\rho(P(-\xx)) = \rho(\pA_\ast) < 0$; thus the index at each fixed
point (singular and regular) is $+1$ (see Remark~\ref{rk:index}). We
have $L(\pA) = 5$ so that $\pA$ has $5$ fixed points.  In particular
$\pA$ has at least $3$ regular fixed points. Now let $\psi=\pA^2$. Of
course $\rho(\psi_\ast) > 0$, so the index of $\psi$ at the
singularities is at most $1$, and the index of $\psi$ at the regular
points is $-1$. Since $\pA$ has at least three regular fixed points,
the same is true for $\psi$. The Lefschetz formula applied to $\psi$
gives $L(\psi) \leq 2 - 3 = -1$. But a straightforward calculation
produces $L(\psi) =1$ --- a contradiction.

\begin{Remark}
The converse of Proposition~\ref{prop:admissible} is not true in general. For 
instance the polynomials $P(\xx) = \xx^6 - 3 \xx^5 + 2 \xx^4 + 2 \xx^2 - 3 \xx + 1$ 
and $P(-\xx)$ are both compatible with the stratum $s= (0^5,0,4^2)$. Nevertheless 
there are no pseudo-Anosov homeomorphisms $\pA$ on a genus-$3$ surface with 
$\chi_{\pA_\ast}(\xx) = P(\xx)$ or $P(-\xx)$ (see~\cite{LanneauThiffeault_ttauto}).
\end{Remark}

%*************************************************************************
We end this section by sketching a proof of our result.

\subsection{Outline of a proof of Theorem~\ref{theo:main}}
\label{sec:outline}

We outline the strategy for calculating the minimum dilatation
$\delta_n$ of pseudo-Anosov $n$-braids on the disc. Obviously
$\delta_n$ is the minimum of $\delta(s)$ where $s$ is taken over all
the strata on the sphere with $n$ punctures. Since we have a candidate
$\rho(P_{n})$ for $\delta_{n}$ (given by train track automata,
see~\cite{LanneauThiffeault_ttauto}) it is sufficient to show that
$\delta(s) \geq \rho(P_{n})$ for each stratum $s$.  The steps are as
follows:

\begin{enumerate}

\item Fix a stratum $s$ on the sphere with $n$ punctures.
Let $h$ be a pseudo-Anosov homeomorphism on the sphere
$\mathbb{P}^1$ that realizes $\delta(s)$. Let us denote by $\pi: S  \rightarrow \mathbb P^1$ 
the orientating double cover (genus($S)=g$). The homeomorphism $h$
lifts to two homeomorphisms $\pA$ and $\tau \circ \pA$ on $S$, where $\tau$
is the hyperelliptic involution of $S$. (By convention, we choose the lift
$\pA$ so that $\rho(\pA_\ast) > 0$.)  These two homeomorphisms also determine
a stratum $s'$ on $S$. Finally, there exists a reciprocal, monic, degree-$2g$ 
polynomial $P\in \Z[\xx]$ with a Perron
root $\delta(s)$ and, by Proposition~\ref{prop:admissible}, both $P(\xx)$ and
$P(-\xx)$ are compatible with the stratum $s'$.

\item We list all degree-$2g$ reciprocal monic polynomials $P$, with a
  Perron root $\rho(P)$, $1 < \rho(P) < \rho(P_{n})$ (there is a
  finite number of such polynomials~\cite{Arnoux1981,Ivanov1988}).  If
  $\delta(s) < \rho(P_{n})$ then $\delta(s)=\rho(P)$ for some other
  $P$ in our list.  We will rule out all such polynomials.

\item 
\label{step:list}
  Take a polynomial $P$ from our list.

\item
  If $P$ is not compatible with stratum $s'$ then go back to
  step~(\ref{step:list}) and move on to the next polynomial.
 
\item If $P(-\xx)$ is not compatible with the stratum $s'$ then go back to
  step~(\ref{step:list}) and move on to the next polynomial.

\item We exclude the remaining polynomials using combinatorics of the orbits
  or the action on the singularities.  
\end{enumerate}
    
\begin{Remark}
  One can actually obtain a more precise result: our techniques apply
  to (almost) all strata of the discs (see the appendix).
\end{Remark}

%*************************************************************************
%*************************************************************************
%*************************************************************************
%*************************************************************************
\section{Disc with $3$ punctures}
\label{section:n=3}

The only stratum on the sphere with four singularities is
$s=(-1;-1^3)$. The corresponding stratum on the associated orientating
double cover $S$ is $s'=(0,0^3)$.  Thus the genus of $S$ is~$1$. The
candidate for $\delta(s)$ is the Perron root of $\xx^2-3\xx+1$. A
direct calculation shows that there are no degree-2 monic reciprocal
polynomials with a Perron root strictly less than that of
$\xx^2-3\xx+1$. The statement is proved.

%*************************************************************************
%*************************************************************************
%*************************************************************************
%*************************************************************************
\section{Disc with $4$ punctures}
\label{section:n=4}

There are two strata to consider on the sphere: $s_1=(0; -1^4)$ and $s_2=(-1;
-1^4,1)$.  We check that the corresponding strata on the orientating double
cover $S_i$ are, respectively,
\[
  s'_1=(0^4,\underline{0}^2) \qquad \textrm{and} \qquad s'_2=(0,0^4,4).
\]
The candidate for $\delta_{4}$ is the Perron root of $\xx^4-2\xx^3-2\xx+1$.

\begin{itemize}
\item For the stratum $s_1$ the surface upstairs, $S_1$, has genus $1$ 
so again there is nothing to prove (see Section~\ref{section:n=3}).

\item For the stratum $s_2$ the surface upstairs, $S_2$, has
  genus $2$.  The next lemma shows that there are four degree-$4$
  monic reciprocal polynomials with a Perron root strictly less than
  our candidate.
\end{itemize}

\begin{Lemma}
\label{lm:monic:n=4}
The degree-$4$ monic reciprocal polynomials with a Perron root strictly less
than $\rho(\xx^4 - 2 \xx^3 - 2 \xx +1) \simeq 2.29663$ are
\begin{center}
\begin{tabular}{lc}
\hline
polynomial & Perron root \\
\hline
$\xx^4 - \xx^3 - \xx^2 - \xx + 1 $& 1.72208\\
$\xx^4 - 2 \xx^3 + \xx^2 - 2 \xx + 1 $& 1.88320\\
$\xx^4 - \xx^3 - 2 \xx^2 - \xx + 1$ & 2.08102\\
$\xx^4 - 3 \xx^3 + 3 \xx^2 - 3 \xx + 1 $ & 2.15372 \\
\hline
\end{tabular}
\end{center}
\end{Lemma}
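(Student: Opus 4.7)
The plan is a direct enumeration. Any degree-$4$ monic reciprocal polynomial has the form $P(\xx) = \xx^4 + a \xx^3 + b \xx^2 + a \xx + 1$ with $a,b \in \Z$. Because $P$ is palindromic, dividing $P(\xx)$ by $\xx^2$ and setting $y = \xx + 1/\xx$ yields the auxiliary quadratic $Q(y) = y^2 + a y + (b-2)$. The roots of $P$ come in reciprocal pairs $\{\xx, 1/\xx\}$, one pair for each root of $Q$, and a Perron root $\lambda > 1$ of $P$ corresponds to a real root $y_1 > 2$ of $Q$ via $\lambda = (y_1 + \sqrt{y_1^2 - 4})/2$.

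Next, I would extract bounds on $a$ and $b$. Setting $L := \rho(\xx^4 - 2\xx^3 - 2\xx + 1) \simeq 2.29663$, the hypothesis $1 < \lambda < L$ is equivalent to $y_1 \in (2,\,L + 1/L) \simeq (2,\,2.73239)$. For $\lambda$ to be a genuine Perron root, the other root $y_2$ of $Q$ must satisfy $|y_2| < y_1$ (otherwise $P$ has a second root of modulus $\geq \lambda$). From $-a = y_1 + y_2$ one reads off $-a \in (0,\,2 y_1) \subset (0,\,5.465)$, so $a \in \{-5,-4,-3,-2,-1\}$; and for each such $a$ the identity $b = 2 - y_1^2 - a y_1$ confines $b$ to a short interval as $y_1$ ranges over $(2,\,L+1/L)$, leaving only a handful of integer candidates per $a$.

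The enumeration is then a one-page check. For each $a \in \{-5,\ldots,-1\}$ I would list the integers $b$ in the admissible interval, solve $Q$ explicitly for each candidate $(a,b)$, verify $y_1 \in (2,\,L+1/L)$ and $|y_2| < y_1$, and extract $\lambda$. Exactly the four polynomials of the statement survive: $a \in \{-5,-4\}$ produce intervals containing no integer, whereas $a = -3, -2, -1$ contribute one, one, and two polynomials, respectively.

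There is no real obstacle beyond careful bookkeeping. The only subtleties lie at the endpoints: one must keep $y_1 \in (2,\,L + 1/L)$ an \emph{open} interval (to exclude $\lambda = 1$ and $\lambda = L$ itself), and must declare $|y_2| = y_1$ as non-Perron. The latter rules out for instance $a = 0$, which would force $y_2 = -y_1$, i.e.\ $P(-\xx) = P(\xx)$, so that $\pm\lambda$ are both roots of the same modulus.
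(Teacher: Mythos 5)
Your proof is correct and follows essentially the same route as the paper: the reciprocal substitution $y = \xx + \xx^{-1}$ reducing $P$ to a quadratic $Q(y)$, then bounding the coefficients of $Q$ from $y_1 \in (2, L+1/L)$ and $|y_2| < y_1$, and enumerating the finitely many integer pairs. The only cosmetic differences are a sign convention for $Q$, the slightly looser a priori bound $a \in \{-5,\dots,-1\}$ (the paper sharpens it to $\{-3,-2,-1\}$ before enumerating), and a small numerical slip: $L + 1/L = 1 + \sqrt{3} \approx 2.73205$, not $2.73239$.
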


\begin{proof}
Let $P=X^4+\alpha X^3 + \beta X^2 + \alpha X+1$ be a reciprocal Perron polynomial and let $Q$ be such 
that $X^4Q(X+X^{-1}) = P$. Observe that $\lambda$ is a root of $P$ if and only if $t=\lambda + \lambda^{-1}$ 
is a root of $Q=X^2 - a X+b$ where $\alpha = -a$ and $\beta = b + 2$. Since $t$ is an increasing function 
of $\lambda$, the polynomial $Q$ admits a Perron root $t$. Let $t'$ be its (real) Galois conjugate so that $|t'| < t$. \medskip

If $t' > 0$ then $a = t+t' >0$. If $t' < 0$ then $|t'| = -t' < t$ thus
$t+t'= a > 0$. Hence $a > 0$. Now $2.29663 + (2.29663)^{-1} =
2.73205$. If $a \geq 4$ then the Perron root of $Q$ satisfies
$$
2 + \sqrt{4-b} \leq t = \tfrac1{2}(a + \sqrt{a^2-4b}) < 2.73205.
$$
Thus $4-0.73205^2< b \leq 4$ and we derive $a=b=4$. Hence $t=2$ which
is a contradiction. Finally $a=1,2,$ or $3$. \medskip

If $a=1$ then $t=\frac1{2}(1 + \sqrt{1-4b}) < 2.73205$. Hence $b >
-4.73205$ so that $-4 \leq b \leq 0$. The cases $b=0,-1,-2$ lead to
non-Perron roots and the cases $b=-3,-4$ lead to the two polynomials
$Q=\xx^2-\xx-3$ and $Q=\xx^2-\xx-4$, that is, the polynomials $P=
\xx^4 - \xx^3 - \xx^2 - \xx + 1$ and $P= \xx^4 - \xx^3 - 2 \xx^2 - \xx
+ 1 $ respectively. \medskip

The second case $a=2$ leads to $t=1 + \sqrt{1-b} < 2.73205$. Hence $b
> -2$ and $b=-1$ or $b=0$.  The case $b=0$ leads to non-Perron roots
and the case $b=-1$ leads to the polynomial $Q=\xx^2-2\xx-1$, that is,
the polynomial $P= \xx^4 - 2\xx^3 + \xx^2 -2 \xx + 1$. \medskip

The third case $a=3$ leads to $t=\frac1{2}(3 + \sqrt{9-4b}) <
2.73205$. Hence $b > 0.732051$, implying $b=1$. The corresponding
polynomial is $Q=\xx^2-3\xx+1$, that is, the polynomial $P= \xx^4 -
3\xx^3 + 3\xx^2 -3 \xx + 1$.
\end{proof}

The next lemma will rule out the first and the third polynomials.

\begin{Lemma}
Let $\pA$ be a pseudo-Anosov homeomorphism on $S_2$ with singularity data
$(4,0)$. If $\rho(\pA_\ast)>0$, then
$$
\textrm{Tr}(\pA_\ast) \ge 2.
$$
\end{Lemma}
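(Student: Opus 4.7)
The plan is to apply the simplified Lefschetz identity highlighted in the excerpt, after pinning down exactly which fixed points of $\pA$ contribute. The stratum $s'_2=(0,0^4,4)$ on $S_2$ has a unique honest singularity: the degree-$4$ point $\Sigma$. The singularity data $(4,0)$ records that $\pA$ fixes both $\Sigma$ and the distinguished marked point $P_0$ (a fake, i.e. degree-$0$, point). The other five degree-$0$ lifts may be permuted by $\pA$, so I will not need to control them.

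Since $\Sigma$ is the only non-fake singularity of $\pA$ and it is fixed, the key identity
\[
2 - \Tr(\pA_\ast) \;=\; \sum_{p\text{ singular, }\pA(p)=p}\textrm{Ind}(\pA,p) \;-\; \#\{\text{regular fixed points of }\pA\}
\]
simplifies to
\[
2 - \Tr(\pA_\ast) \;=\; \textrm{Ind}(\pA,\Sigma) - R,
\]
where $R$ denotes the total number of regular fixed points of $\pA$ on $S_2$.

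Now I bound the right-hand side from above. By hypothesis $\rho(\pA_\ast)>0$ and $\Sigma$ has even degree $2d=4$, so Remark \ref{rk:index} gives $\textrm{Ind}(\pA,\Sigma)\in\{1,\,1-2(d+1)\}=\{1,-5\}$; in particular $\textrm{Ind}(\pA,\Sigma)\le 1$. On the other hand, the marked point $P_0$ is a regular fixed point of $\pA$, so $R\ge 1$. Putting these together yields
\[
2 - \Tr(\pA_\ast) \;\le\; 1 - 1 \;=\; 0,
\]
i.e.\ $\Tr(\pA_\ast)\ge 2$, as claimed.

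There is essentially no obstacle here once the stratum is read correctly; the only subtlety is conceptual, namely noting that the five degree-$0$ lifts are counted as \emph{regular} points in the Lefschetz formula, so the sum over singular fixed points reduces to the single term $\textrm{Ind}(\pA,\Sigma)$, while the forced regular fixed point $P_0$ provides the lower bound $R\ge 1$ needed to close the inequality.
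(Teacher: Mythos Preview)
Your argument is correct and essentially identical to the paper's: both reduce the Lefschetz identity to $2-\Tr(\pA_\ast)=\textrm{Ind}(\pA,\Sigma)-\#\{\text{regular fixed points}\}$, bound the index by $1$, and use the forced regular fixed point to get $\#\{\text{regular fixed points}\}\ge 1$. (One cosmetic slip: there are only four other degree-$0$ lifts besides $P_0$, not five, but this plays no role in the proof.)
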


\begin{proof}
If $\Sigma\in S_2$ is the degree-four singularity then one has
$$
2-\textrm{Tr}(\pA_\ast) = \textrm{Ind}(\pA,\Sigma) - \#\Fix(\pA),
$$
where $\Fix(\pA)$ is the set of {\it regular} fixed points of $\pA$.
Since $\#\Fix(\pA) \geq 1$ and $\textrm{Ind}(\pA,\Sigma) \leq 1$, we get the desired 
inequality.
\end{proof}

This rules out the first and the third polynomials since for those we
would have $\textrm{Tr}(\pA_\ast) = 1$.  Now let us show that the
second polynomial is also inadmissible. \medskip

Assume the second polynomial is admissible and let $\pA$ be a
pseudo-Anosov homeomorphism such that $\chi_{\pA_\ast}(\xx)=\xx^4 - 2
\xx^3 + \xx^2 - 2 \xx + 1$. Then $L(\pA)=0$. Since the singularity
data of $\pA$ is $(4,0)$, $\pA$ fixes the degree-four singularity
(with positive index) and has only one regular fixed point (which is
also fixed by $\tau$ by construction).  Since $L(\pA^2)=0$ the same
argument applies to $\pA^{2}$ so that $\pA$ has no period-2 orbits.

Now let us count the number of fixed points of $\tau \circ \pA$. These
are fixed points of $\pA^{2}$. Indeed if $\tau \circ \pA(p)=p$ then
$\pA(p)=\tau(p)$ and $\pA^2(p)= \tau \circ \pA(p)=p$. By the above
discussion, $\tau \circ \pA$ has only one regular fixed point (hence
also fixed by $\tau$). Thus $\tau \circ \pA$ has two fixed points in
total (one singular and one regular). But
\[
  L(\tau \circ \pA) = 2 - \textrm{Tr}((\tau \circ \pA)_\ast) = 2 +
  \textrm{Tr}(\pA_*) = 4.
\]
Since $L(\tau \circ \pA)$ is the number of fixed points (singular and regular, see Remark~\ref{rk:index}) of 
$\tau \circ \pA$, we get a contradiction. \medskip

Finally, to finish the proof of the $n=4$ case let us show that the
fourth polynomial is inadmissible.  Assume it is admissible, which
implies~$L(\pA)=-1$. Then $\pA$ must fix the singularity with positive
index (since otherwise $L(\pA) \leq -5$) and has two regular fixed
points. Since $L(\pA^2)=-1$ as well, the same argument shows that
$\pA^{2}$ has
two regular fixed points (the same as $\pA$) and so $\pA$ has no period-$2$ orbits. \\
As we have seen, fixed points of $\tau \circ \pA$ are also fixed
points of $\pA^{2}$; thus $\tau \circ \pA$ has three fixed points in
total (one singular and two regular). But
\[
  3 = L(\tau \circ \pA) = 2 - \textrm{Tr}((\tau \circ \pA)_*) = 2 +
  \textrm{Tr}(\pA_*) = 5,
\]
which is a contradiction. Hence $\delta(s_2) = \rho(\xx^4 - 2 \xx^3 - 2 \xx^2 + 1)$ and 
Theorem~\ref{theo:main} for $n=4$ is proved.

%*************************************************************************
%*************************************************************************
%*************************************************************************
%*************************************************************************
\section{Disc with $5$ punctures}
\label{section:n=5}

There are four strata to consider on the sphere: 
$$
s_1=(1; -1^5), \qquad s_2=(0;-1^5,1), \qquad s_3=(-1;-1^5,2) \qquad 
\textrm{and} \qquad s_4=(-1;-1^5,1^2).
$$
The corresponding strata on the orientating double cover $S_i$ are
\[
  s'_1=(0^5,4), \qquad s'_2=(0^5,4,\underline{0}^2), \qquad
  s'_3=(0^5,0,\underline{2}^2) \quad \textrm{and} \quad s'_4=(0^5,0,4^2).
\]
The candidate for $\delta_{5}$ is the Perron root of
$\xx^4-\xx^3-\xx^2-\xx+1$.  For the first three cases, there is
nothing to prove since by Lemma~\ref{lm:monic:n=4} there are no
degree-$4$ monic reciprocal polynomials having a Perron root less than
$\delta_{5}$.  We use a computer for the last case
(analogous to Lemma~\ref{lm:monic:n=4}).  This is
straightforward. \medskip

The surface $S_4$ has genus $3$. There are~$9$ degree-$6$ monic reciprocal polynomials with a Perron root
strictly less than our candidate, namely:
\begin{center}
\begin{tabular}{lc}
\hline
polynomial & Perron root \\
\hline
$\xx^{6} + \xx^{5} - \xx^4 -  3 \xx^3 - \xx^2 + \xx + 1$		& $1.32472 $ \\
$\xx^{6} - \xx^4 - \xx^3 - \xx^2 + 1$  & $1.40127 $ \\
$\xx^{6} - \xx^{5} + \xx^4 -  3 \xx^3 + \xx^2 - \xx + 1$  & $1.46557 $ \\
$\xx^{6} - \xx^{5} - \xx^3 - \xx + 1$  & $1.50614 $ \\
$\xx^{6} - \xx^{5} - \xx^4 + \xx^3 - \xx^2 - \xx + 1$  & $1.55603 $ \\
$ \xx^{6} - 2 \xx^{5} + 3 \xx^4 - 5 \xx^3 + 3 \xx^2 -  2 \xx + 1$  & $1.56769 $ \\
$\xx^{6} - \xx^4 - 2 \xx^3 - \xx^2 + 1$  & $1.58235 $ \\
$ \xx^{6} - 2 \xx^{5} + 2 \xx^4 - 3 \xx^3 +  2 \xx^2 - 2 \xx + 1$  & $1.63557 $ \\
$\xx^{6} - \xx^{5} + \xx^4 - 4 \xx^3 + \xx^2 - \xx + 1$  & $1.67114$ \\
\hline
\end{tabular}
\end{center}

\noindent We will use a simple algebraic criterion in order to
eliminate these polynomials.

\begin{Lemma}
\label{lm:eliminate4}
Let $\pA$ be a pseudo-Anosov homeomorphism on $S_4$ with singularity data
$(4,4,0)$, with $\rho(\pA_\ast)>0$.  Then $L(\pA^\po) \leq 1$ for any $\po\geq
1$. In addition, if $L(\pA)\geq 0$ then
\[
  L(\pA^3) \leq -11.
\]
\end{Lemma}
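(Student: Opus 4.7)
The approach is to apply the Lefschetz fixed point formula directly, using the index constraints from Remark~\ref{rk:index} together with the propagation rule in Corollary~\ref{cor:trick}. The decisive geometric input is that $S_4$ carries exactly two singularities of positive degree: the two degree-$4$ points recorded by the singularity data $(4,4,0)$, which are fixed pointwise by $\pA$.

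For the first claim, I would observe that $\rho(\pA^\po_\ast) = \rho(\pA_\ast)^\po > 0$ for every $\po \geq 1$, so Remark~\ref{rk:index} applies to every iterate. Each fixed degree-$4$ singularity of $\pA^\po$ contributes at most $+1$ to $L(\pA^\po)$, while each regular fixed point contributes exactly $-1$ (this is the form of the Lefschetz trick displayed in the paper right after the statement of the Lefschetz theorem, valid in the orientable setting). Since $S_4$ has only the two degree-$4$ singular points, and since the marked regular point is a fixed point of $\pA$ and therefore of every iterate, I conclude $L(\pA^\po) \leq 2 - 1 = 1$.

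For the second claim, suppose $L(\pA) \geq 0$. Writing $\textrm{Ind}(\pA, \Sigma_i) \in \{+1,\,-5\}$ for the two degree-$4$ singular fixed points $\Sigma_1, \Sigma_2$, the possible values of the singular sum $\textrm{Ind}(\pA,\Sigma_1) + \textrm{Ind}(\pA,\Sigma_2)$ are $2$, $-4$, or $-10$. Since the regular contribution is at most $-1$ (the marked point being a guaranteed regular fixed point), only the value $2$ is compatible with $L(\pA) \geq 0$; hence $\textrm{Ind}(\pA,\Sigma_i) = +1$ for both $i$. By Remark~\ref{rk:index}, this means $\pA$ cyclically permutes the $d+1 = 3$ outgoing separatrices at each $\Sigma_i$. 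Corollary~\ref{cor:trick} with $d = 2$ then yields $\textrm{Ind}(\pA^3, \Sigma_i) = 1 - 2(d+1) = -5$ for both singularities. The marked point is still a regular fixed point of $\pA^3$, so
\[
L(\pA^3) \leq -5 - 5 - 1 = -11.
\]

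There is essentially no obstacle here: the argument is bookkeeping, its crux being the two facts that the regular-point contribution is $-1$ (not $+1$) in this orientable setting and that the cap of $+1$ on singular contributions is restrictive precisely because $S_4$ has only two singularities of positive degree. The substantive input beyond elementary arithmetic is Corollary~\ref{cor:trick}, which already appears in the paper and simply records the $(d+1)$-step transition of a cyclically-permuted-separatrix singularity to a fully-fixed-separatrix one.
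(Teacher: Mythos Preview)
Your proof is correct and follows essentially the same route as the paper's: bound the singular contribution by $2$, subtract the guaranteed regular fixed point to get $L(\pA^\po)\le 1$; then, under $L(\pA)\ge 0$, force both degree-$4$ singularities to have index $+1$ and apply the $d=2$ case of Corollary~\ref{cor:trick} to get index $-5$ at each for $\pA^3$. The only difference is that you read the notation $(4,4,0)$ via the paper's convention to take the two singularities as fixed \emph{pointwise} from the outset, whereas the paper's proof (consistently with the restated lemma, which uses $(4^2,0)$) first rules out the possibility that $\pA$ swaps them by observing that a swap would already force $L(\pA)\le -1$; this extra one-line case distinction makes the paper's argument apply to the stratum $s'_4=(0^5,0,4^2)$ as actually needed.
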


\begin{proof}[Proof of the lemma]
The first remark $L(\pA^\po) \leq 1$ is trivial: positive indices arise only
from singularities, so that $L(\pA^\po) \leq 2$. But since $\pA$ fixes a
regular point for any $\po$ the first statement holds. \medskip

Now let us assume in addition that $L(\pA) \geq 0$.  Since $\pA$ fixes
a regular point, if $\pA$ permutes the two singularities then $L(\pA)
<0$, a contradiction.  Hence, $\pA$ fixes the two singularities, with
at least one of the singularities having positive index.  Assume that
the other singularity is fixed with \emph{negative} index. Then
$L(\pA) \leq 1 - 5 - \#\textrm{\{Regular fixed points\}} < 0$ which is
again a contradiction.  Hence the index at the two singularities is
positive. Thus the third power of $\pA$ fixes the singularities and
their separatrices, and
\[
  L(\pA^3) = -5 -5 - \#\textrm{\{Regular fixed points of $\pA^3$\}} \leq -11
\]
which proves the lemma.
\end{proof}

The above lemma can be restated as follows:

\begin{Lemma}
Let $P=\xx^6+\alpha \xx^5 +\beta \xx^4+\gamma \xx^3 + \beta \xx^2 + \alpha \xx + 1$ be a
degree-$6$ monic reciprocal polynomial. If there exists a pseudo-Anosov homeomorphism $\pA$ with 
$\rho(\pA_\ast)>0$, $\chi_{\pA_\ast} = P$, and with singularity data $(4^2,0)$, then
$2+\alpha \leq 1$ and $2-\alpha^2+2\beta \leq 1$. \medskip
Moreover if $2+\alpha \geq 0$ then $2+\alpha^3-3\alpha \beta + 3 \gamma \leq -11$.
\end{Lemma}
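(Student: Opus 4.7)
The plan is to treat this statement as a direct algebraic reformulation of the previous lemma, rather than as an independent result. The bridge between the two formulations is Proposition~\ref{prop:calcul:trace}, which expresses the traces $\textrm{Tr}(\pA^\po_\ast)$ in terms of the coefficients of the reciprocal polynomial $P$, combined with the definition of the Lefschetz number $L(\pA^\po) = 2 - \textrm{Tr}(\pA^\po_\ast)$.

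First I would write down the three Lefschetz numbers we need. Applying Proposition~\ref{prop:calcul:trace} to $P = \xx^6 + \alpha \xx^5 + \beta \xx^4 + \gamma \xx^3 + \beta \xx^2 + \alpha \xx + 1$ gives $\textrm{Tr}(\pA_\ast) = -\alpha$, $\textrm{Tr}(\pA^2_\ast) = \alpha^2 - 2\beta$, and $\textrm{Tr}(\pA^3_\ast) = -\alpha^3 + 3\alpha\beta - 3\gamma$, hence
\[
L(\pA) = 2 + \alpha, \quad L(\pA^2) = 2 - \alpha^2 + 2\beta, \quad L(\pA^3) = 2 + \alpha^3 - 3\alpha\beta + 3\gamma.
\]

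Next, I would invoke the previous lemma. The inequality $L(\pA^\po) \leq 1$ for all $\po \geq 1$, applied to $\po = 1$ and $\po = 2$, yields respectively $2+\alpha \leq 1$ and $2 - \alpha^2 + 2\beta \leq 1$, which are the first two stated inequalities. For the conditional statement, observe that $2+\alpha \geq 0$ is exactly $L(\pA) \geq 0$; the previous lemma then guarantees $L(\pA^3) \leq -11$, and substituting the expression for $L(\pA^3)$ above gives $2 + \alpha^3 - 3\alpha\beta + 3\gamma \leq -11$.

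Since the content is purely a translation, there is no genuine obstacle; the only thing to be careful about is matching conventions (the Lefschetz sign, and the fact that the characteristic polynomial is reciprocal so that Proposition~\ref{prop:calcul:trace} applies directly). I would therefore keep the proof to a few lines, essentially the display above together with one sentence citing the previous lemma.
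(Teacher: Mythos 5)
Your proposal is correct and matches the paper's intent exactly: the paper presents this lemma as a restatement of Lemma~\ref{lm:eliminate4} (``The above lemma can be restated as follows''), and the translation is precisely the one you give, via Proposition~\ref{prop:calcul:trace} and the identity $L(\pA^\po) = 2 - \textrm{Tr}(\pA^\po_\ast)$.
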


None of the~$9$ polynomials above satisfies this algebraic criterion;
thus Theorem~\ref{theo:main} for $n=5$ is proved.
%
%We have summarized the results for~$n=5$ in Table~\ref{strata:n=5}.
%
%\begin{table}[htbp]
%\begin{tabular}{cllcccc}
%\hline
%case & stratum on $\mathbb P^1$ & stratum on $S$  & genus of $S$ & \# polynomials & \# compatible \\
%\hline
%$s'_1$ & $(1;-1^5)$ & $(0^5,4)$ & 2 & 0 & 0 \\
%$s'_2$ & $(0;-1^5,1)$ & $(0^5,4,\underline{0}^2)$ & 2 & 0 & 0 \\
%$s'_3$ & $(-1;-1^5,2)$ & $(0^5,0,\underline{2}^2) $ & 2 & 0 & 0 \\
%$s'_4$ & $(-1;-1^5,1^2)$ & $(0^5,0,4^2) $ & 3 & 9 & 0 \\
%\hline
%\end{tabular}
%\medskip
%\caption{Strata for the five-punctured disc.}
%\label{strata:n=5}
%\end{table}
%

%*************************************************************************
%*************************************************************************
%*************************************************************************
%*************************************************************************
\section{Disc with $6$ punctures}
\label{section:n=6}

The techniques of the previous sections can also be applied to the
case $n=6$. However the complexity becomes huge so we rely on a set of
\Mathematica~\cite{Mathematica7} functions to test whether a
polynomial~$P$ is compatible with a given stratum.  This is
straightforward: we simply try all possible permutations of the
singularities and separatrices (there is a finite number of these),
and calculate the contribution to the Lefschetz numbers for each
iterate of~$\pA$.  We then check whether the deficit in the Lefschetz
numbers can be exactly compensated by regular periodic orbits.  If
not, the polynomial~$P$ cannot correspond to a characteristic
polynomial of some pseudo-Anosov homeomorphism on that stratum.  If it
can, we also test the polynomial~$P(-X)$ corresponding
to~$\rho(\pA_\ast) < 0$.

%*************************************************************************
\subsection{Puncturing a singularity of higher degree}

So far we have considered strata on the sphere where only
singularities of degree $-1$ or the marked singularity (corresponding
to the disc's boundary) are punctured.  In general, it is possible to
have higher-degree punctured singularities.  This does not yield any
new pseudo-Anosov homeomorphisms: for instance, taking the stratum
$(0;-1^6,2)$ of the sphere (corresponding to a stratum on the disc
with $6$ punctures) and puncturing the degree-$2$ singularity gives a
stratum of the sphere corresponding to a stratum on the disc with $7$
punctures.  However, the pseudo-Anosov homeomorphisms on this new
stratum are identical to the original since the degree-$2$ singularity
must be fixed anyways.

Another example is to puncture the degree-$1$ singularity of the
stratum $(0;-1^5,1)$ (arising from a stratum of the disc with 5
punctures; see~\cite{Venzke_thesis}). This produces a braids on the
disc with~$6$ punctures with dilatation~$1.72208$; namely
$\sigma_2\sigma_1\sigma_2\sigma_1\left( \sigma_1 \sigma_2 \sigma_3
  \sigma_4 \sigma_5 \right)^2$.  We will show in the next section that
$\delta_6 \simeq 1.72208$.  For this reason, the pseudo-Anosov braid
with the least dilatation on the disc with $6$ punctures actually
arises from a stratum of the disc with $5$ punctures. 

Puncturing a degree-$1$ singularity in the stratum $(-1;-1^5,1,1)$
does not give new pseudo-Anosov homeomorphisms, since this merely
eliminates the ones that permute the two degree-$1$ singularities.
However, the least dilatation on the stratum $(-1;-1^5,1,1)$ with a
punctured degree-$1$ singularity can be larger than the unpunctured
case.  We do not consider such cases here.
See~\cite{Ham2007,Venzke_thesis} for more details.

Thus we reduce Theorem~\ref{theo:main}, case $n=6$ to the following:

\begin{Theorem}
  Let $s_i$ be a stratum of the sphere where the punctures are only
  singularities of degree one or regular points.  Then $\delta(s_i) >
  \rho(\xx^4-\xx^3-\xx^2-\xx-1) \simeq 1.72208$.
\end{Theorem}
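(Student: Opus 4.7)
The plan is to execute the systematic enumeration outlined in Section~\ref{sec:outline}, specialized to $n=6$. First I would enumerate all strata $s_i$ on the sphere with seven marked points subject to the hypothesis that each of the six (non-boundary) punctures sits either at a degree-$(-1)$ singularity or at a regular point of the foliation, while the marked boundary point may be of any degree compatible with Gauss--Bonnet. This yields a finite list: for marked-point degrees $-1, 0, 1, 2$ and all admissible combinations of unpunctured singularities, the strata include $(-1;-1^6,3)$, $(-1;-1^6,2,1)$, $(-1;-1^6,1^3)$, $(0;-1^6,2)$, $(0;-1^6,1^2)$, $(1;-1^6,1)$, $(2;-1^6)$, together with the variants obtained by replacing some $-1$ cusps by regular-point punctures and enlarging the unpunctured singularity data accordingly. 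For each $s_i$ I would form the lifted stratum $s'_i$ on the orienting double cover $S$ and record its genus $g$, which in all cases lies between $2$ and $5$.

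Next, for each $s'_i$ of genus $g$, I would generate the finite list of degree-$2g$ monic reciprocal integer polynomials $P$ with $1 < \rho(P) \le \rho_0 \simeq 1.72208$. The closed upper bound is essential: because we want the \emph{strict} inequality $\delta(s_i) > \rho_0$, the candidate polynomial itself --- which in the $g=2$ case is $P_0 := \xx^4-\xx^3-\xx^2-\xx+1$ and in higher genus its products with reciprocal factors such as $(\xx\pm 1)^{2}$, $\xx^{2}+1$, or cyclotomic factors --- must be explicitly included and ruled out. Finiteness follows from the coefficient bound cited in the outline.

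For each candidate $P$, I would run the Lefschetz admissibility test encoded in the accompanying \Mathematica~routines: exhaustively enumerate all permutations of the singularities of $s'_i$ and of their outgoing separatrices, compute the singular contributions to $L(\pA^{\po}) = 2 - \mathrm{Tr}(\pA^{\po}_{\ast})$ for the first several iterates $\po$ via Proposition~\ref{prop:calcul:trace}, and check whether the remaining deficit can be accounted for by a nonnegative integer number of honest regular periodic orbits of each period. Both $P(\xx)$ and $P(-\xx)$ must be tested, corresponding to the lifts $\pA$ and $\tau\circ\pA$ of Proposition~\ref{prop:admissible}. Surviving polynomials are eliminated by finer combinatorial arguments in the spirit of the second example of Section~\ref{ex:P2}, examining the constraints imposed at a higher iterate $\po$ once the action on the singularities has been pinned down.

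The principal obstacle will be ruling out the boundary polynomial $P_0$ (or its higher-genus reciprocal extensions) on strata whose combinatorial type superficially resembles $(0;-1^5,1)$, most conspicuously $(1;-1^6,1)$: since $\rho_0$ \emph{is} genuinely realized by a pseudo-Anosov on the sphere coming from the $n=5$ case, the purely algebraic Lefschetz numbers of $P_0$ will fit \emph{some} distribution of fixed points, and one cannot hope to discard $P_0$ on such a stratum by pure trace-counting. The argument must instead distinguish the genuinely $6$-punctured situation from the $n=5$-derived one: the extra $-1$ cusp that $n=6$ forces on the sphere lifts to an additional marked regular point on $S$, and the hyperelliptic involution $\tau$ must either fix that point or pair it with another, yielding in each case a contradiction with the Lefschetz count of $\pA$ or $\tau\circ\pA$ at some small iterate. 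Working this step out stratum-by-stratum is the only non-algorithmic part of the argument; once it is done, every polynomial in the list is eliminated and the strict inequality $\delta(s_i) > \rho_0$ follows.
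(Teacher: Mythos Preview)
Your overall approach --- enumerate the strata on the sphere, pass to the orienting double cover, list the candidate reciprocal polynomials bounded by the target dilatation, eliminate them via Lefschetz compatibility of both $P(\xx)$ and $P(-\xx)$, and finish off any survivors by examining the action on singularities --- is exactly the paper's. The paper's execution is recorded in Table~\ref{strata:n=6}: seven strata with cover-genus $2$, $3$, or $4$ (not up to $5$), and after the Lefschetz filter only two degree-$8$ polynomials survive, both on $s'_7=(0^6,0,4^3)$; these are dispatched by a short argument comparing $L(\pA)$, $L(\pA^2)$, and $L(\tau\circ\pA)$ against the possible $\pA$-actions on the three degree-$4$ singularities.

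Where you depart from the paper is in insisting on the closed bound $\rho(P)\le\rho_0$ so as to obtain the \emph{strict} inequality $\delta(s_i)>\rho_0$. The paper does not do this: it enumerates only polynomials with $\rho(P)<\rho_0$, so the argument in Section~\ref{section:n=6} literally yields $\delta(s_i)\ge\rho_0$, which is already enough for $\delta_6=\rho_0$ in Theorem~\ref{theo:main}. Consequently the ``principal obstacle'' you anticipate --- ruling out $P_0=\xx^4-\xx^3-\xx^2-\xx+1$ and its cyclotomic extensions on strata such as $(1;-1^6,1)$ --- is simply not confronted in the paper; nor are your regular-point-puncture variants listed separately, since those fill in to pseudo-Anosovs on fewer punctures and are absorbed by the bound $\delta_5=\rho_0$. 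Your plan is thus aimed at a marginally sharper statement than Section~\ref{section:n=6} actually proves.
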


%*************************************************************************
\subsection{Strata of the six punctured disc}

In Table~\ref{strata:n=6} we present the list of all possible strata
of the six-punctured disc with the corresponding strata on the surface
$S$, and the list of polynomials with Perron root strictly less
than~$\rho(\xx^4-\xx^3-\xx^2-\xx-1)$, that of the candidate polynomial.
Under ``\# polynomials'' we give the number of polynomials $P$ with Perron
root strictly less than our candidate, and under ``\# compatible'' we
give how many of these are both compatible ($P(X)$ and $P(-X)$) with the Lefschetz formula for
that stratum. 

\begin{table}[htbp]
\begin{tabular}{cllccc}
\hline
case & stratum on $\mathbb P^1$ & stratum on $S$  & genus of $S$ & \# polynomials & \# compatible \\
\hline
$s'_1$ & $(2;-1^6)$ & $(0^6,\underline{2}^2)$ & 2 & 0 & 0 \\
$s'_2$ & $(1;-1^6,1)$ & $(0^6,4,4) $ & 3 & 9 & 0 \\
$s'_3$ & $(0;-1^6,2)$ & $(0^6,\underline{0}^2,\underline{2}^2)$ & 2 & 0 & 0 \\
$s'_4$ & $(-1;-1^6,3)$ & $(0^6,0,8) $ & 3 & 9 & 0 \\
$s'_5$ & $(0;-1^6,1^2)$ & $(0^6,4^2,\underline{0}^2)$ & 3 & 9 & 0 \\
$s'_6$ & $(-1;-1^6,1,2)$ & $(0^6,0,4,\underline{2}^2)$ & 3 & 9 & 0 \\
$s'_7$ & $(-1;-1^6,1^3)$ & $(0^6,0,4^3)$ & 4 & 148 & 2 \\
\hline
\end{tabular}
\medskip
\caption{Strata of the six-punctured disc.}
\label{strata:n=6}
\end{table}

%********************************************************************************************
\subsubsection{The cases $s_2$, $s_4$, $s_5$ and $s_6$}

We use a \Mathematica~\cite{Mathematica7} program to eliminate the
polynomials using the Lefschetz formula, and find that there are no
such polynomials.

%********************************************************************************************
\subsubsection{The case $s_7$}

Since the number of polynomials is large, we use a
\Mathematica~\cite{Mathematica7} program to eliminate the polynomials
using the Lefschetz formula.  Among the $148$ polynomials, there are
only two polynomials that satisfy this criterion, namely
$$
\begin{tabular}{lc}
\hline
polynomial & Perron root \\
\hline
$\xx^8 - 2 \xx^7 + 2 \xx^6 - 4 \xx^5 + 5 \xx^4 - 4 \xx^3 + 2 \xx^2 - 2 \xx + 1$		& 1.59937 \\
$\xx^8 - 3 \xx^7 + 4 \xx^6 - 7 \xx^5 + 10 \xx^4 - 7 \xx^3 + 4 \xx^2 - 3 \xx + 1$	& 1.67114 \\
\hline
\end{tabular}
$$
We will rule out these two polynomials by considering the $\pA$-action
on the singularities. \bigskip

For the first polynomial, assume there exists $\pA$ such that
$\chi_{\pA_\ast} =P$. We first show that $\pA$ fixes only one degree-4
singularity. Recall that $\pA$ fixes (at least) one regular point.  If
$\pA$ permutes the singularities then $L(\pA) \leq -1$, contradicting
$L(\pA) = 0$. If $\pA$ fixes pointwise the singularities (necessarily
with positive index) then it has three regular fixed points.  Hence
$\pA^2$ has at least three regular fixed points and still fixes
pointwise the singularities (with positive index). Thus
$$
L(\pA^2) \leq 3 - 3 = 0 
$$
contradicting $L(\pA^2) = 2$. We have proved that $\pA$ fixes only one singularity and so the same is true 
for $\tau \circ \pA$. Since $L(\tau \circ \pA) = 4$ one can conclude that $\pA$ has $3$ regular fixed points. This implies 
that $L(\pA) \leq 1 - 3 < 0$ contradicting again $L(\pA) = 0$.

For the second polynomial, since $L(\pA) = 1$ and $L(\pA^{2}) =0$ the
same argument shows that $\pA$ fixes only one degree-4 singularity, so
the same is true for $\tau \circ \pA$.  Since $L(\tau \circ \pA) = 5$
one can conclude that $\tau \circ \pA$ has $4$ regular fixed
points. In particular $\pA$ has at least two regular fixed points and
so $L(\pA) \leq -1$, which is not possible.

Theorem~\ref{theo:main} for $n=6$ is proved.

%*************************************************************************
%*************************************************************************
%*************************************************************************
%*************************************************************************
\section{Disc with $7$ punctures}
\label{section:n=7}

Again the techniques of the previous sections can be applied to the
seven-punctured disc. Table~\ref{strata:n=7} gives the list of all
possible strata of the seven-punctured disc with the corresponding
strata on the surface $S$, and the number of polynomials with a
dilatation smaller than the candidate polynomial
($\rho(\xx^7-2\xx^4-2\xx^3+1)=\delta_7\simeq1.46557$).  The only
stratum we have to check is~$s_{12}$. The two compatible polynomials
are
$$
\begin{tabular}{lc}
\hline
polynomial & Perron root \\
\hline
$\xx^{10} - 4 \xx^{9} + 5 \xx^8 - \xx^7 - 2 \xx^6 + 2 \xx^5 - 2 x^4 - x^3 + 5 x^2 -  4 x + 1$ & 1.40127 \\
$\xx^{10} - 2 \xx^{9} + x^7 + x^6 - 2 x^5 + x^4 + x^3 - 2 x + 1$    & 1.45799 \\
\hline
\end{tabular}
$$
There are no pseudo-Anosov homeomorphisms that realize these
polynomials. Indeed, if we assume there is one, then the same
arguments as in the previous section give a contradiction on the
number of periodic points (periodic orbits of length $14$ and of
length $7$, respectively).  We leave the details to the reader.
\begin{table}[htbp]
\begin{tabular}{cllccc}
\hline
case & stratum on $\mathbb P^1$ & stratum on $S$ & genus of $S$ & \# polynomials & \# compatible \\
\hline
$s'_1$ & $(3;-1^7)$ & $(0^7,8)$ & 3 & 2 & 0 \\
$s'_2$ & $(2;-1^7,1)$ & $(0^7,4,\underline{2}^2)$ & 3 & 2 & 0 \\
$s'_3$ & $(1;-1^7,2)$ & $(0^7,4,\underline{2}^2)$ & 3 & 2 & 0 \\
$s'_4$ & $(0;-1^7,3)$ & $(0^7,8,\underline{0}^2)$ & 3 & 2 & 0 \\
$s'_5$ & $(-1;-1^7,4)$ & $(0^7,0,\underline{4}^2)$ & 3 & 2 & 0 \\
$s'_6$ & $(1;-1^7,1^2)$ & $(0^7,4^2,4)$ & 4 & 21 & 0 \\
$s'_7$ & $(0;-1^7,1,2)$ & $(0^7,4,\underline{0}^2,\underline{2}^2)$ & 3 & 2 & 0 \\
$s'_8$ & $(-1;-1^7,1,3)$ & $(0^7,0,4,8)$ & 4 & 21 & 0 \\
$s'_9$ & $(-1;-1^7,2^2)$ & $(0^7,0,\underline{2}^4)$ & 3 & 2 & 0 \\
$s'_{10}$ & $(0;-1^7,1^3)$ & $(0^7,0,4^3)$ & 4 & 21 & 0 \\
$s'_{11}$ & $(-1;-1^7,1^2,2)$ & $(0^7,0,4^2,\underline{2}^2)$ & 4 & 21 & 0 \\
$s'_{12}$ & $(-1;-1^7,1^4)$ & $(0^7,0,4^4)$ & 5 & 227 & 2 \\
\hline
\end{tabular}
\medskip
\caption{Strata of the seven-punctured disc.}
\label{strata:n=7}
\end{table}

%*************************************************************************
%*************************************************************************
%*************************************************************************
%*************************************************************************
\section{Disc with $8$ punctures}
\label{section:n=8}

Again the techniques of the previous sections can be applied to the
eight-punctured disc. Table~\ref{strata:n=8} gives the list of all
strata of the eight-punctured disc with the corresponding strata on
the surface $S$, and the number of polynomials with a dilatation
smaller than the candidate polynomial
($\rho(\xx^8-2\xx^5-2\xx^3+1)=\delta_8\simeq1.41345$), as given
by~\cite{Venzke_thesis}.  There are five polynomials to check, and
these are readily eliminated by examining the detailed action on
singularities.  The method is identical to the previous sections so we
omit the detailed argument here.
\begin{table}[htbp]
\begin{tabular}{cllccc}
\hline
case & stratum on $\mathbb P^1$ & stratum on $S$ & genus of $S$ & \# polynomials & \# compatible \\
\hline
$s'_1$ & $(4;-1^8)$ & $(0^8,\underline{4}^{2})$ & 3 & 2  & 0  \\
$s'_2$ & $(3;-1^8,1)$ & $(0^8,4,8)$ & 4 & 15 & 0 \\
$s'_3$ & $(2;-1^8,2)$ & $(0^8,\underline{2}^{2},\underline{2}^{2})$ & 3 & 2 & 0 \\
$s'_4$ & $(1;-1^8,3)$ & $(0^8,4,8)$ & 4 & 15 & 0 \\
$s'_5$ & $(0;-1^8,4)$ & $(0^8,\underline{0}^{2},\underline{4}^{2})$ & 3 & 2 & 0  \\
$s'_6$ & $(-1;-1^8,5)$ & $(0^8,0,12)$ & 4 & 15 & 1 \\
$s'_7$ & $(2;-1^8,1^{2})$ & $(0^8,\underline{2}^{2},4^{2})$ & 4 & 15 & 0  \\
$s'_8$ & $(1;-1^8,1,2)$ & $(0^8,4,4,\underline{2}^{2})$ & 4 & 15 & 0 \\
$s'_9$ & $(0;-1^8,1,3)$ & $(0^8,\underline{0}^{2},4,8)$ & 4 & 15 & 0 \\
$s'_{10}$ & $(-1;-1^8,1,4)$ & $(0^8,0,4,\underline{4}^{2})$ & 4 &15  & 0 \\
$s'_{11}$ & $(0;-1^8,2^{2})$ & $(0^8,\underline{0}^{2},\underline{2}^{4})$ & 3 & 2 & 0 \\
$s'_{12}$ & $(-1;-1^8,2,3)$ & $(0^8,0,\underline{2}^{2},8)$ & 4 &15  & 0 \\
$s'_{13}$ & $(1;-1^8,1^{3})$ & $(0^8,4,4^{3})$ & 5 & 129  & 2 \\
$s'_{14}$ & $(0;-1^8,1^{2},2)$ & $(0^8,\underline{0}^{2},4^{2},\underline{2}^{2})$ & 4 &15  & 0  \\
$s'_{15}$ & $(-1;-1^8,1^{2},3)$ & $(0^8,0,4^{2},8)$ & 5 & 129 & 0 \\
$s'_{16}$ & $(-1;-1^8,1,2^{2})$ & $(0^8,0,4,\underline{2}^{4})$ & 4 & 15 & 0 \\
$s'_{17}$ & $(0;-1^8,1^{4})$ & $(0^8,\underline{0}^{2},4^{4})$ & 5 & 129 & 2 \\
$s'_{18}$ & $(-1;-1^8,1^{3},2)$ & $(0^8,0,4^{3},\underline{2}^{2})$ & 5 & 129 & 0 \\
$s'_{19}$ & $(-1;-1^8,1^{5})$ & $(0^8,0,4^{5})$ & 6 & 1096 & 0 \\
\hline
\end{tabular}
\medskip
\caption{Strata of the eight-punctured disc.}
\label{strata:n=8}
\end{table}

%*************************************************************************
%*************************************************************************
%*************************************************************************
%*************************************************************************
\appendix

\section{Minimum dilatation for each stratum}

In this appendix, we give the minimum dilatation for each stratum and
give explicit examples of a braid realizing each minimum, for $3\le n
\le 7$. We first detail the $n=5$ case (Section~\ref{appendix:n5}) and
then give the other cases without proof since the techniques are the
same (Section~\ref{appendix:general}).

%*************************************************************************
\subsection{Strata for the disc with $5$ punctures}
\label{appendix:n5}

Here we give an alternative proof to that of Ham \&
Song~\cite{Ham2007}.  There are four strata to consider on the sphere:
$$
s_1=(1; -1^5), \qquad s_2=(0;-1^5,1), \qquad s_3=(-1;-1^5,2) \qquad 
\textrm{and} \qquad s_4=(-1;-1^5,1^2).
$$
The corresponding strata on the orientating double cover $S_i$ are
\[
  s'_1=(0^5,4), \qquad s'_2=(0^5,4,\underline{0}^2), \qquad
  s'_3=(0^5,0,\underline{2}^2) \quad \textrm{and} \quad s'_4=(0^5,0,4^2).
\]
The candidate dilatations for each stratum are given in
Table~\ref{tab:disc5}.  We will examine each stratum in turn.

%%%%%%%%%
\subsubsection{The strata $s_1$ and $s_2$} The candidate dilatation
for these strata is the Perron root of
$\xx^4-\xx^3-\xx^2-\xx+1$. Lemma~\ref{lm:monic:n=4} implies there are
no degree-4 monic reciprocal polynomials with a Perron root strictly
less than that of the candidate.

%%%%%%%%%
\subsubsection{The stratum $s_3$} The genus of $S_3$ is $2$. Again Lemma~\ref{lm:monic:n=4} implies 
there are three degree-4 monic reciprocal polynomials with a Perron root strictly less than our candidate
for $\delta(s_3)$. The next lemma will rule out these three polynomials.

\begin{Lemma}
\label{lm:eliminate3}
Let $\pA$ be a pseudo-Anosov homeomorphism on $S$ with singularity data 
$(2,2,0)$. Assume in addition that $\rho(\pA_\ast)>0$. If $\textrm{Tr}(\pA_\ast) \leq 2$ then
\[
  \textrm{Tr}(\pA^2_\ast) \geq 9.
\]
\end{Lemma}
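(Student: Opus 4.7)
The plan is to pin down, via the Lefschetz formula, how $\pA$ must act on the two degree-$2$ singularities $\Sigma_1, \Sigma_2$ and on their separatrices, and then read off $\Tr(\pA^2_\ast)$ from Corollary~\ref{cor:trick}.

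First, I would observe that the hypothesis $\Tr(\pA_\ast) \leq 2$ gives $L(\pA) \geq 0$. Since $\rho(\pA_\ast) > 0$, by Remark~\ref{rk:index} every regular fixed point of $\pA$ contributes index $-1$, and every fixed singularity contributes $+1$ (separatrices cyclically permuted) or $1-2(d+1) = -3$ for $d=1$ (all separatrices fixed). Moreover, the marked point downstairs has odd degree $-1$, so it has a single (regular) preimage on $S$, which is fixed by $\pA$. Thus the number $r$ of regular fixed points of $\pA$ satisfies $r \geq 1$.

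Next, I would run a case analysis on the action of $\pA$ on $\{\Sigma_1, \Sigma_2\}$: either $\pA$ swaps them (contributing $0$ to the singular part of $L(\pA)$), or fixes them both (contributing one of $\{+2, -2, -6\}$ according to the indices at each $\Sigma_i$). Writing $L(\pA) = (\text{singular part}) - r \geq 0$ with $r \geq 1$, every branch except the one where $\pA$ fixes both $\Sigma_i$ with index $+1$ forces $L(\pA) < 0$, a contradiction. The sub-case $L(\pA) = 2$, $r = 0$ is ruled out by $r \geq 1$, so in the surviving case one also has $r \in \{1,2\}$.

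Finally, I would apply Corollary~\ref{cor:trick} with $d = 1$ at each $\Sigma_i$: since $\textrm{Ind}(\pA, \Sigma_i) = 1$, we get $\textrm{Ind}(\pA^2, \Sigma_i) = 1 - 2(d+1) = -3$, contributing a total of $-6$ to $L(\pA^2)$. Letting $r_2$ denote the number of regular fixed points of $\pA^2$, we have $r_2 \geq r \geq 1$ (fixed points of $\pA$ are fixed points of $\pA^2$), hence $L(\pA^2) \leq -6 - 1 = -7$ and therefore $\Tr(\pA^2_\ast) = 2 - L(\pA^2) \geq 9$. The main obstacle is just the bookkeeping of indices; the crux is that the marked point's lift guarantees $r \geq 1$, eliminating the otherwise-delicate case where $\pA$ fixes both singularities with index $+1$ but has no regular fixed points.
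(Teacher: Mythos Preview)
Your proof is correct and follows essentially the same approach as the paper's: translate the trace hypothesis into $L(\pA)\ge 0$, use the forced regular fixed point (the $0$ in the singularity data) to conclude both degree-$2$ singularities are fixed with index $+1$, then apply Corollary~\ref{cor:trick} to get index $-3$ at each under $\pA^2$ and hence $L(\pA^2)\le -7$. Your case analysis is simply a more explicit version of the paper's one-line ``otherwise we would have $L(\pA)<0$''.
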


We first show how to rule out the polynomials and then we will prove
the lemma. For each polynomial $P$, let us assume there exists $\pA$
with $\chi_{\pA_\ast}=P$; then we can see that $\textrm{Tr}(\pA_\ast)
\geq 2$, so that we should have $\textrm{Tr}(\pA^2_\ast) \geq 9$. But
Proposition~\ref{prop:calcul:trace} shows that $\textrm{Tr}(\pA^2_\ast) \leq
5$. We now prove the lemma.

\begin{proof}[Proof of Lemma~\ref{lm:eliminate3}]
  First of all observe that $\textrm{Tr}(\pA_\ast) \leq 2$ if and only
  if $L(\pA) \geq 0$, and $\textrm{Tr}(\pA^2_\ast) \geq 9$ if and only
  if $L(\pA^2) \leq -7$. Thus let us assume $L(\pA) \geq 0$.  Since
  $\pA$ fixes a regular point, $\pA$ fixes the two degree-2
  singularities with positive index (otherwise we would have $L(\pA)
  <0$).  Hence $\pA^2$ fixes the two singularities and their separatrices, so
  that the index of $\pA^2$ at the singularities is $-3$. Now
\[
  L(\pA^2) = -3 -3 - \#\Fix(\pA^2) \leq -7,
\]
where $\Fix(\pA^2)$ is the set of regular fixed points of $\pA^2$. The
lemma is proved.
\end{proof}

%%%%%%%%%
\subsubsection{The stratum $s_4$}\label{sec:s4dic5} The surface $S_4$ has genus $3$.
There are~$41$ degree-$6$ monic reciprocal polynomials with a Perron
root strictly less than our candidate $\delta(s_4)$. Among these
polynomials, there are only $3$ that satisfy the conclusion of Lemma~\ref{lm:eliminate4}, namely
\begin{align*}
  P_1 &= \xx^6 - 3 \xx^5 + 2 \xx^4 + 2 \xx^2 - 3 \xx + 1, \\
  P_2 &= \xx^6 - 3 \xx^5 + 4 \xx^4 - 5 \xx^3 + 4 \xx^2 - 3 \xx + 1,
  \quad\textrm{ and } \\
  P_3 &= \xx^6 - 4 \xx^5 + 6 \xx^4 - 6 \xx^3 + 6 \xx^2 - 4 \xx + 1.
\end{align*}
We discuss now the compatibility of the three polynomials $P_1$,
$P_2$, and $P_3$.  \medskip

\noindent First of all $P_2(-\xx)$ is not compatible with the stratum
$s'_4$ (see Example~\ref {ex:P2}). \medskip

For the third polynomial, both $P_3(\xx)$ and $P_3(-\xx)$ are
admissible, which means the Lefschetz formula does not rule out
$P_3$. Let us assume that there exists $\pA$, with singularity data
$s'_4$, such that $\chi_{\pA_\ast} = P_3$.  We will get a
contradiction using the $\pA$-action on the singularities. More
precisely we will show $\pA$ permutes the singularities whereas $\tau
\circ \pA$ fixes them, which is impossible (since $\tau$ fixes the
singularities).

First assume that the two singularities are fixed by $\pA$. Since
$L(\pA)=-2$ the index at these singularities is positive, otherwise
$L(\pA) \leq -5 + 1 = -4$. Now $\pA^3$ fixes the singularities with
negative index, so that $L(\pA^3) \leq -5 -5 = -10$. This is a
contradiction with $L(\pA^3)=-8$. Thus $\pA$ has to permutes the two
singularities. Now let us show that $\tau \circ \pA$ fixes the two
singularities.  If not, then since $L(\tau \circ \pA) = 6$ the
homeomorphism $\tau \circ \pA$ has $6$ regular fixed points, so that
$(\tau \circ \pA)^2=\pA^2$ also has $6$ regular fixed points. Hence
$L(\pA^2) \leq 2 - 6 = -4$ but $L(\pA^2)=1$: we have a
contradiction. \medskip

Finally, the first polynomial $P_1$ cannot be ruled out using the
Lefschetz formula. Indeed it is compatible with the stratum $s'_4$ and
there are no contradictions with the $\pA$-action on the
singularities.  Thus the Lefschetz formula does not help to calculate
$\delta(s_{4})$ and we need a more elaborate argument to conclude.
Using train track automata, one can actually prove
$\delta(s_4)\simeq2.01536$~\cite{Ham2007,LanneauThiffeault_ttauto}.

%*************************************************************************
\subsection{Minimum dilatation for each stratum}
\label{appendix:general}

We give the minimum dilatation for each stratum and provide explicit
examples of a braid realizing each minimum. We have indicated by a
star the cases where we cannot conclude using the Lefschetz formula
only. For these cases one can conclude using train track automata
(see~\cite{Ham2007} for $n=5$ and~\cite{LanneauThiffeault_ttauto} for
$n=6,7$).

\medskip

\noindent Note that in the tables we denote~$\Delta_k$ the braid
$\sigma_1\cdots\sigma_k$.  We used Toby Hall's implementation of the
Bestvina--Handel algorithm~\cite{HallTrain,Bestvina1995} to verify
that the braids correspond to pseudo-Anosov homeomorphisms.

\begin{table}[htbp]
\begin{tabular}{lll}
\begin{tabular}{llll}
\hline
case & $\delta(s) \simeq$ & polynomial & braid \\
\hline
$s_1$ & 2.61803 & $\xx^2-3\xx+1$ & $\sigma_1\sigma_2^{-1}$ \\
\hline
\end{tabular}& \qquad&
\begin{tabular}{llll}
\hline
case & $\delta(s) \simeq$ & polynomial & braid \\
\hline
$s_1$& 2.61803 & $\xx^3-2\xx^2-2\xx+1$ &
$\sigma_1\sigma_2\sigma_1\sigma_2\sigma_3^{-1}\Delta_3$\\
$s_2$& 2.29663 & $\xx^4-2\xx^3-2\xx+1$ & $\sigma_1\sigma_2\sigma_3^{-1}$\\
\hline
\end{tabular}
\end{tabular}
\medskip
\caption{Discs with $3$ and $4$ punctures.}
\end{table}

\begin{table}[htbp]
\begin{tabular}{llll}
\hline
case & $\delta(s) \simeq$ & polynomial & braid \\
\hline
$s_1$& 1.72208 & $\xx^4-\xx^3-\xx^2-\xx+1$ & $\deltthree\deltfour \sigma_3^{-1}$ \\
$s_2$& 1.72208 & $\xx^5-2\xx^3-2\xx^2+1$ & $\sigma_1^2\deltfour^2$ \\
$s_3$& 2.15372 & $\xx^5-2\xx^4-2\xx+1$ & $\deltthree\sigma_4^{-1}$ \\
$s_4 \ *$& 2.01536 & $\xx^6-\xx^5-4\xx^3-\xx+1$ & $\sigma_1\sigma_2\sigma_4^{-1}\sigma_3^{-1}$ \\
\hline
\end{tabular}
\medskip
\caption{Disc with $5$ punctures.}
\label{tab:disc5}
\end{table}

\begin{table}[htbp]
\begin{tabular}{llll}
\hline
case & $\delta(s)$ & polynomial & braid \\
\hline
$s_1$& 1.88320 & $\xx^5-\xx^4-\xx^3-\xx^2-\xx+1$ & $\Delta_5\sigma_4\sigma_5$ \\
$s_2$& 1.83929 & $\xx^6-\xx^4-4\xx^3-\xx^2+1$ & $\sigma_5\sigma_4^{-1}\deltfive^2$ \\
$s_3$& 1.88320 & $\xx^6-2\xx^4-2\xx^3-2\xx^2+1$ & $\sigma_1^2\sigma_4\deltfive^2$ \\
$s_4 \ *$& 2.08102 & $\xx^6-2\xx^5-2\xx+1$ & $\Delta_4\sigma_5^{-1}$ \\
$s_5 \ *$& 2.08102 & $\xx^7-\xx^6-2\xx^5-2\xx^2-\xx+1$ & $\sigma_4\sigma_5^2\sigma_4\deltfive^2$ \\
$s_6$& 1.88320 & $\xx^7-\xx^6-2\xx^4-2\xx^3-\xx+1$ & $\Delta_3\sigma_5^{-1}\sigma_4^{-1}$ \\
$s_7 \ *$& 2.17113 & $\xx^8-2\xx^7+\xx^6-4\xx^5+4\xx^4-4\xx^3+\xx^2-2\xx+1$ & $\Delta_3(\sigma_3\sigma_4\sigma_5)^{-2}$ \\
\hline
\end{tabular}
\medskip
\caption{Disc with $6$ punctures.}
\end{table}

\begin{table}[htbp]
\begin{tabular}{llll}
\hline
case & $\delta(s)$ & polynomial & braid \\
\hline
$s_1$& 1.55603 & $\xx^6-\xx^5-\xx^4+\xx^3-\xx^2-\xx+1$ & $\sigma_3\sigma_4\sigma_5\sigma_6\sigma_2\sigma_3\sigma_4\Delta_3\deltsix$ \\
$s_2$& 1.46557 & $\xx^7-2\xx^4-2\xx^3+1$ & $\sigma_4^{-2}\deltsix^2$ \\
$s_3$& 1.46557 & $\xx^7-2\xx^4-2\xx^3+1$ & $\sigma_6^2\deltsix^2$ \\
$s_4$& 1.55603 & $\xx^7-2\xx^5-2\xx^2+1$ & $\sigma_5^2\deltsix^3$ \\
$s_5 \ *$& 2.04249 & $\xx^7-2\xx^6-2\xx+1$ & $\sigma_4^{-2}\deltsix$ \\
$s_6$& 1.61094 & $\xx^8-\xx^7-2\xx^5+2\xx^4-2\xx^3-\xx+1$ & $\sigma_2^{-1}\sigma_3\sigma_4\sigma_5\deltsix^2$ \\
$s_7 \ *$& 2.47541 & $\xx^8-3\xx^7+2\xx^6-2\xx^5+2\xx^3-2\xx^2+3\xx-1$ & $\Delta_3\sigma_3(\sigma_3\sigma_4\sigma_5\sigma_6)^{-1}$ \\
$s_8 \ *$& 1.80979 & $\xx^8-\xx^7-2\xx^5-2\xx^3-\xx+1$ & $\Delta_4\sigma_6^{-1}\sigma_5^{-1}$ \\
$s_9$& 1.75488 & $\xx^8-\xx^7-4\xx^4-\xx+1$ & $\Delta_3\sigma_6^{-1}\sigma_5^{-1}\sigma_4^{-1}$ \\
$s_{10}$& 1.61094 & $\xx^9-\xx^7-2\xx^6-2\xx^3-\xx^2+1$ &
$\sigma_5^{-1}\sigma_4^{-1}\sigma_3\sigma_4\sigma_5\sigma_6\deltsix^3$ \\
$s_{11} \ *$& 2.04249 & $\xx^9-2\xx^8+\xx^7-2\xx^6-2\xx^3+\xx^2-2\xx+1$ & $\sigma_4\sigma_5\sigma_6\sigma_3\sigma_4\sigma_5\sigma_2^{-1}\sigma_1^{-1}\deltsix^{-1}$ \\
$s_{12} \ *$& 2.21497 & $\xx^{10}-2\xx^9-\xx^7-\xx^3-2\xx+1$ & $\sigma_2\sigma_1^2\sigma_2\deltsix^{-2}$ \\
\hline
\end{tabular}
\medskip
\caption{Disc with $7$ punctures.}
\end{table}

%\bibliographystyle{amsalpha}
%\bibliography{bib/journals_abbrev,bib/articles,Mathematica}

\providecommand{\bysame}{\leavevmode\hbox to3em{\hrulefill}\thinspace}
\providecommand{\MR}{\relax\ifhmode\unskip\space\fi MR }
% \MRhref is called by the amsart/book/proc definition of \MR.
\providecommand{\MRhref}[2]{%
  \href{http://www.ams.org/mathscinet-getitem?mr=#1}{#2}
}
\providecommand{\href}[2]{#2}

\end{document}